\numberwithin{equation}{section}
\DeclareMathOperator{\Aut}{Aut}
\begin{document}
\newcommand{\s}{\vspace{0.2cm}}
\newcommand{\Q}{\mathbb{Q}}
\newcommand{\C}{\mathbb{C}}
\newcommand{\Gal}{\mbox{Gal}}

\newtheorem{theo}{Theorem}
\newtheorem{prop}{Proposition}
\newtheorem{coro}{Corollary}
\newtheorem{lemma}{Lemma}[section]
\newtheorem{example}{Example}
\theoremstyle{remark}
\newtheorem{rema}{\bf Remark}
\newtheorem{defi}{\bf Definition}
\newtheorem*{defisn}{\it Definition}

\title[On families of Riemann surfaces with automorphisms]{On families of Riemann surfaces with automorphisms}
\date{}

\author{Milagros Izquierdo}
\address{Matematiska institutionen, Link\"{o}pings Universitet, Link\"{o}ping, Sweden.}
\email{milagros.izquierdo@liu.se}

\author{Sebasti\'an Reyes-Carocca}
\address{Departamento de Matem\'atica y Estad\'istica, Universidad de La Frontera, Temuco, Chile.}
\email{sebastian.reyes@ufrontera.cl}

\author{Anita M. Rojas}
\address{Departamento de Matem\'aticas, Facultad de Ciencias, Universidad de Chile, \~Nu\~noa, Santiago, Chile.}
\email{anirojas@uchile.cl}

\thanks{The first and second authors were partially supported by Redes Grant 2017-170071. The second author was partially supported by Fondecyt Grants 11180024, 1190991. The third author was partially supported by Fondecyt Grant 1180073.}
\keywords{Riemann surfaces, Fuchsian groups, Group actions, Jacobian varieties}
\subjclass[2010]{30F10, 14H37, 14H30, 14H40}

\begin{abstract}
In this article we determine the maximal possible order of the automorphism group of the form $ag + b$, where $a$ and $b$ are integers, of a complex three and four-dimensional family of compact   Riemann surfaces of genus $g$, appearing for all genus. In addition, we construct and describe explicit complex three and four-dimensional families possessing these maximal numbers of automorphisms.
\end{abstract}
\dedicatory{Dedicated to our friend Antonio F. Costa on the occasion of his 60th birthday}
\maketitle
\thispagestyle{empty}

\section{Introduction}

The classification of groups of automorphisms of compact Riemann surfaces is a classical subject of study which has attracted broad interest ever since Schwarz and Hurwitz proved that the automorphism group of a compact Riemann surface of genus $g \geqslant 2$ is finite and its order is at most $84g-84.$ 

\s

Riemann surfaces of genus $g$  with a group of automorphisms of order of the form $$ag+b \,\, \mbox{ where } a,b \mbox{ are integers}$$can be found in the literature plentiful supply. The most classical example concerning that is the class of the Riemann surfaces which possess exactly $84g-84$ automorphisms; they are regular covers of the projective line ramified over three values, marked with 2, 3 and 7. Another remarkable example is the cyclic case. Wiman in \cite{Wi} showed that the largest cyclic group of automorphisms of a  Riemann surface of genus $g \geqslant 2$ has order at most $4g+2.$ Furthermore,  the Riemann surface given by \begin{equation*} \label{eWiman}y^2=x^{2g+1}-1\end{equation*}shows that, for each value of $g,$ this upper bound is attained; see also \cite{Harvey1}. Kulkarni in \cite{K1} proved that, for $g$ sufficiently large, the aforementioned curve is the unique Riemann surface of genus $g$ with an automorphism of order $4g+2.$ 

\s

Riemann surfaces with $4g$ automorphisms have been classified in \cite{BCI}; the Jacobian varieties of these surfaces were studied in \cite{yo2}. Riemann surfaces with $8(g+3)$ automorphisms were considered  in \cite{Accola} and  \cite{Mac}. Under the assumption that $g-1$ is a prime number, the case $ag-a$ has been completely classified in \cite{BJ}, \cite{IJRC},  \cite{IRC} and \cite{yo}. Recently, the case $3g-3$ in which $g-1$ is assumed to be the square of a prime number was classified in \cite{CRC}.

\s
 It is worth mentioning that the order of the automorphism group of a  Riemann surface of genus $g$ need not be of the form $ag+b.$ See, for instance,  \cite{CIY2}.

\s

This paper is aimed to  address the problem of determining the maximal possible order of the automorphism group of the form $ag+b,$ where $a$ and $b$ are integers, of a family of   Riemann surfaces of genus $g$, appearing for all genus. To review known facts and  to state the results of this paper, inspired by Accola's notation introduced in \cite{Accola}, we shall bring in the following definition. 

\begin{defisn}For each $d \geqslant 0$ and $A \subseteq \mathbb{N}-\{1\}$ we define $N_d(g, A)$ to be the unique integer of the form $ag+b$ where $a,b \in \mathbb{Z}$, if exists, which satisfies:  \begin{enumerate}
\item for each $g \in A$ there is a complex $d$-dimensional family of  Riemann surfaces of genus $g$ with a group of automorphisms  of order $N_d(g, A),$ and
\item for at least one $g \in A,$ there is no a complex $d$-dimensional family of  Riemann surfaces of genus $g$ with strictly more than $N_d(g, A)$ automorphisms.
\end{enumerate}
If $A=\mathbb{N}-\{1\}$  then we simply write  $N_d(g)$ instead of $N_d(g, A).$
\end{defisn}

In the sixties, Accola \cite{Accola} and Maclachlan \cite{Mac} considered the zero-dimensional case; namely, they dealt with the problem of determining the largest order of the automorphism group of compact Riemann surfaces appearing for all genus. Independently, they proved that $$N_0(g)=8g+8$$by considering the Riemann surface given by the  curve $ \label{eAccola}y^2=x^{2g+2}-1.$ Later, the uniqueness problem was addressed by Kulkarni in \cite{K1}. Concretely, he succeeded in proving that for $g \not\equiv 3 \mbox{ mod } 4$ sufficiently large, the aforementioned curve is the unique Riemann surface of genus $g$ with $8g+8$ automorphisms.

\s

The one-dimensional case was studied in \cite{CI}. For each $g \geqslant 2,$ there is a complex one-dimensional family of  Riemann surfaces of genus $g$ with a group of automorphisms isomorphic to $\mathbf{D}_{g+1} \times C_2$ and $$N_1(g)=4g+4.$$ The uniqueness problem was also studied by noticing that for $g \equiv 3 \mbox{ mod } 4,$ there exists another one-dimensional family with the same number of automorphisms. Besides, the two-dimensional case was addressed  in \cite{yo}, where a classification of compact Riemann surfaces of genus $g$ endowed with a maximal non-large group of automorphisms was studied. By means of this classification, it was noticed that $$N_2(g)=4g-4$$due to the existence, for all $g \geqslant 2,$ of a complex two-dimensional family of  Riemann surfaces of genus $g$ with dihedral action. In addition, it was proved that if $g-1$ is a prime number then the aforementioned family is the unique complex two-dimensional family with this number of automorphisms.

\s

This article is devoted to extend the previous results by dealing with the complex three and four-dimensional cases. Concretely, we first prove that the equality $$N_3(g)=2g-2$$holds. We then observe that this case as well as the zero, one and two-dimensional cases are very much in contrast with the  four-dimensional situation. Indeed, we prove that if $B=\{g : g \geqslant 3\}$ then $$N_4(g,B) \,\, \mbox{does not exist.}$$

In proving the non-existence of $N_4(g,B)$, we obtain the following facts, which are interesting in their own right. If $A_1$ and $A_2$ consist of those values of $g \geqslant 3$ that are odd and even respectively, then $$N_4(g, A_1)=g-1 \,\, \mbox{ and } \,\, N_4(g, A_2)=g.$$

The strategy to prove the results is to find upper bounds for the number of automorphisms and then to construct in a very explicit manner complex three and four-dimensional families attaining these bounds. After that, we study in detail these families; concretely:
\begin{enumerate}
\item we address the uniqueness problem by providing conditions under which they turn into unique,
\item we describe the families themselves as subsets of the moduli space, and
\item we provide an isogeny decomposition of the corresponding families of Jacobian varieties.
\end{enumerate}

Some computations will be done with the help of {\it SageMath} \cite{sage}.
\s

Section \S\ref{s2} is devoted to review  the basic preliminaries. The three-dimensional case will be considered in Sections \S\ref{dim3} and \S\ref{s4}. The four-dimensional case will be considered in Sections \S\ref{dim4}, \S\ref{s6} and \S\ref{s7}.

\section{Preliminaries} \label{s2}
\subsection{Fuchsian groups and group actions} Let $\Delta$ be a Fuchsian group, namely, a discrete group of automorphisms of the upper half-plane $\mathbb{H}.$ If the orbit space $\mathbb{H}_{\Delta}$ given by the action of $\Delta$ on $\mathbb{H}$ is compact, then the algebraic structure of $\Delta$ is determined by its  {\it signature}; namely, the tuple \begin{equation} \label{sig} \sigma(\Delta)=(h; m_1, \ldots, m_l),\end{equation}where $h$ denotes the genus of the quotient surface $\mathbb{H}_{\Delta}$ and $m_1, \ldots, m_l$ the branch indices in the associated universal projection $\mathbb{H} \to \mathbb{H}_{\Delta}.$ If $l=0$ then it is said that $\Delta$ is a surface Fuchsian group.

\s

If $\Delta$ is a Fuchsian group of signature \eqref{sig} then $\Delta$ has a canonical presentation with generators $\alpha_1, \ldots, \alpha_{h}$, $\beta_1, \ldots, \beta_{h},$ $ x_1, \ldots , x_l$ and relations
\begin{equation}\label{prese}x_1^{m_1}=\cdots =x_l^{m_l}=\Pi_{i=1}^{h}[\alpha_i, \beta_i] \Pi_{j=1}^l x_j=1,\end{equation}where $[u,v]$ stands for the commutator $uvu^{-1}v^{-1}.$ The Teichm\"{u}ller space of $\Delta$ is a complex analytic manifold homeomorphic to the complex ball of dimension $3h-3+l$.

\s

Let $\Delta_2$ be a group of automorphisms of $\mathbb{H}.$ If $\Delta$ is a subgroup of $\Delta_2$ of finite index then $\Delta_2$ is also Fuchsian. Moreover, if the signature of $\Delta_2$ is $(h_2; n_1, \ldots, n_s)$ then $$2h-2 + \Sigma_{i=1}^l(1-\tfrac{1}{m_i})= [\Delta_2 : \Delta] (2h_2-2 + \Sigma_{i=1}^s(1-\tfrac{1}{n_i})).$$This equality is called the Riemann-Hurwitz formula.  We refer to \cite{Harvey}, \cite{yoibero} and \cite{singerman} for more details.

\s

Let $S$ be a compact Riemann surface and let $\mbox{Aut}(S)$ denote its automorphism group. A finite group $G$ {\it acts} on $S$ if there is a group monomorphism $G\to \Aut(S).$ The  orbit space $S_G$ is endowed with a Riemann surface structure such that the canonical projection $S \to S_G$ is holomorphic. 

\s

By uniformization theorem, there is a surface Fuchsian group $\Gamma$ such that $S$ and $\mathbb{H}_{\Gamma}$ are isomorphic. Moreover, Riemann's existence theorem ensures that $G$ acts on $S \cong \mathbb{H}_{\Gamma}$ if and only if there is a Fuchsian group $\Delta$ containing $\Gamma$ together with a group  epimorphism \begin{equation*}\label{epi}\theta: \Delta \to G \, \, \mbox{ such that }  \, \, \mbox{ker}(\theta)=\Gamma.\end{equation*}

Note that $S_G \cong \mathbb{H}_{\Delta}.$ It is said that $G$ acts on $S$ with signature $\sigma(\Delta)$ and that this action is {\it  represented} by the {\it surface-kernel epimorphism} $\theta$. For the sake of simplicity, we usually identify $\theta$ with the tuple of its images or {\it generating vector:} (see, for example, \cite{Brou} and \cite{yoibero})$$\theta = (\theta(\alpha_1), \ldots, \theta(\alpha_h),\theta(\beta_1), \ldots, \theta(\beta_h), \theta(x_1), \ldots, \theta(x_l)).$$

%

\subsection{Equivalence of actions} \label{braid}

Let $S_1$ and $S_2$ be two compact Riemann surfaces of the same genus. Two actions $\psi_i: G \to \mbox{Aut}(S_i)$, for $i=1,2,$ are {\it topologically equivalent} if there exist $\omega \in \mbox{Aut}(G)$ and an orientation-preserving homeomorphism $f : S_1 \to S_2$ such that
\begin{equation}\label{equivalentactions}
\psi_2(g) = f \psi_1(\omega(g)) f^{-1} \mbox{ for all } g \in G.
\end{equation}Observe that if we write $(S_i)_{G} \cong \mathbb{H}_{\Delta_i}$ then each homeomorphism $f$ satisfying (\ref{equivalentactions}) induces  an isomorphism $f^*: \Delta_1 \to \Delta_2$. Thus, in order to describe the topological equivalence classes of a given group, one may assume in the above that $\Delta_1=\Delta_2.$ We shall write $\Delta$ instead of $\Delta_i$ and $S$ instead of $S_i.$

We denote the subgroup of $\mbox{Aut}(\Delta)$ consisting of those $f^*$ by $\mathfrak{B}$. It is known that $\theta_1, \theta_2 : \Delta \to G$ define topologically equivalent actions if and only if there are $\omega \in \mbox{Aut}(G)$ and $f^* \in \mathfrak{B}$ such that  $\theta_2 = \omega\circ\theta_1 \circ f^*$ (see \cite{Brou} and \cite{Harvey}). We recall for later use that, with the notations \eqref{prese}, if the genus $h$ of $S_G$ is zero,  then $\mathfrak{B}$ is generated by the  {\it braid transformations} $\Phi_{i}$ defined by: \begin{equation} \label{bd}\Phi_i: x_i \mapsto x_{i+1}, \hspace{0.3 cm}x_{i+1} \mapsto x_{i+1}^{-1}x_{i}x_{i+1} \hspace{0.3 cm} \mbox{ and }\hspace{0.3 cm} x_j \mapsto x_j \mbox{ when }j \neq i, i+1\end{equation}
for each $i \in \{1, \ldots, l-1\}.$ Meanwhile, if $h=1$, then, in addition to \eqref{bd}, $\mathfrak{B}$ contains $$A_{1,n}: \alpha_1 \mapsto \alpha_1, \,\, \beta_1 \mapsto \beta_1 \alpha_1^n, \,\,  x_j \to x_j, \hspace{ 0,5 cm } A_{2,n}: \alpha_1 \mapsto \alpha_1 \beta_1^n, \,\, \beta_1 \mapsto \beta_1, \,\,  x_j \to x_j$$where $n \in \mathbb{Z},$ and the transformations $$ C_{1,i}: \alpha_1 \mapsto x_1 \alpha_1, \,\, \beta_1 \mapsto \beta_1, \,\, x_i \mapsto y_1x_iy_1^{-1}, \,\, x_j \mapsto x_j \,\, \mbox{ for each }\, j \neq i$$ $$ C_{2,i}: \alpha_1 \mapsto   \alpha_1, \,\, \beta_1 \mapsto x_2\beta_1, \,\, x_i \mapsto y_2x_iy_2^{-1}, \,\, x_j \mapsto x_j \,\, \mbox{ for each }\, j \neq i $$for $i \in \{1, \ldots, l\}$, where $x_1=\beta_1^{-1}wz,  y_1=z\beta_1^{-1}w,  x_2=wz\alpha_1, y_2=z\alpha_1 w$, $w=\Pi_{k < i} x_k$ and $z=\Pi_{k > i} x_k.$ See, for example, \cite{bcitesis}, \cite{Brou} and \cite{Harvey}.

\s

Let $\mathscr{B}_g$ denote the locus of orbifold-singular points of the moduli space $\mathscr{M}_g$ of  Riemann surfaces of genus $g.$ It was proved in \cite{b} (see also \cite{Harvey}) that $\mathscr{B}_g$ admits an {\it equisymmetric stratification}, \begin{equation*}\label{stratif} \mathscr{B}_g = \cup_{G, \theta} \bar{\mathscr{M}}_g^{G, \theta}\end{equation*}where the non-empty {\it equisymmetric strata} are in bijective correspondence with the topological classes of  actions that are maximal (in the sense  of \cite{singerman2}). Concretely:

\begin{enumerate}

\item the {\it equisymmetric stratum} ${\mathscr{M}}_g^{G, \theta}$ consists of  those Riemann surfaces $S$ of genus $g$ with (full) automorphism group isomorphic to $G$ such that the action is topologically equivalent to $\theta$,

\item the closure $\bar{\mathscr{M}}_g^{G, \theta}$ of  ${\mathscr{M}}_g^{G, \theta}$ is a closed irreducible algebraic subvariety of $\mathscr{M}_g$ and consists of those Riemann surfaces $S$ of genus $g$ with a group of automorphisms isomorphic to $G$ such that the action is  topologically equivalent to $\theta$, and

\item  if the equisymmetric stratum ${\mathscr{M}}_g^{G, \theta}$ is non-empty then it is a smooth, connected,
locally closed algebraic subvariety of $\mathscr{M}_{g}$ which is Zariski dense in
$\bar{\mathscr{M}}_g^{G, \theta}.$ 
\end{enumerate}

\begin{defisn} Let $G$ be a group and let $\sigma$ be a signature. The subset of $\mathscr{M}_g$ consisting of all those  Riemann surfaces of genus $g$ endowed with a group of automorphisms isomorphic to $G$ acting with signature $\sigma$ will be called a {\it closed family} or simply a {\it family}.
\end{defisn}

We recall that:
\begin{enumerate}
\item the complex dimension of the family agrees with the complex dimension of the Teichm\"{u}ller space associated to a Fuchsian group of signature $\sigma,$
\item the interior of a family, if non-empty, consists of those Riemann surfaces whose (full) automorphism group is isomorphic to $G$ and is formed by finitely many equisymmetric strata which are in correspondence with the pairwise non-equivalent topological actions of $G,$ and 
\item the complement of the interior (with respect to the family) is formed by those Riemann surfaces that have strictly more automorphisms than $G.$ 
\end{enumerate}

\begin{defisn} A  family is called {\it equisymmetric} if its interior consists of exactly one equisymmetric stratum.
\end{defisn}

\subsection{Jacobian and Prym varieties} \label{jacos}  Let $S$ be a compact Riemann surface of genus $g \geqslant 2.$ We denote by $\mathscr{H}^1(S, \mathbb{C})^*$ the dual of the $g$-dimensional complex vector space of  1-forms on $S,$ and by $H_1(S, \mathbb{Z})$ the first integral homology group of $S.$
We recall that the {\it Jacobian variety} of $S,$ defined by $$JS=\mathscr{H}^1(S, \mathbb{C})^*/H_1(S, \mathbb{Z}),$$is an irreducible principally polarized abelian variety of dimension $g.$ The relevance of  the Jacobian variety  lies, partially, in  Torelli's  theorem, which establishes that two Riemann surfaces are isomorphic if and only if the corresponding Jacobian varieties are isomorphic as principally polarized abelian varieties. 

\s

If $H \leqslant \mbox{Aut}(S)$ then the  associated regular covering map $\pi : S \to S_H$   induces a homomorphism $$\pi^*: JS_H \to JS$$between the associated Jacobians. The image of $\pi^*$ is an abelian subvariety of $JS$ isogenous to $JS_H$. Thereby,    the classical Poincar\'e's Reducibility theorem implies that there exists an abelian subvariety  of  $JS,$ henceforth  denoted by $\mbox{Prym}(S \to S_H)$ and called the {\it Prym variety} associated to $\pi,$ such that  $$JS \sim JS_H \times \mbox{Prym}(S \to S_H),$$where $\sim$ stands for isogeny. See \cite{bl} for more details.

\s

If $G$ acts on a compact Riemann surface $S$ then this action  induces a $\mathbb{Q}$-algebra homomorphism $$\rho : \mathbb{Q} [G] \to \mbox{End}_{\mathbb{Q}}(JS)=\mbox{End}(JS) \otimes_{\mathbb{Z}} \mathbb{Q},$$from the rational group algebra of $G$  to the rational endomorphism algebra of $JS.$ For  $ \alpha \in {\mathbb Q}[G]$, set  $$A_{\alpha} := {\textup Im} (\alpha)=\rho (n\alpha)(JS) \subseteq JS$$where $n$ is a suitable positive integer chosen in such a way that $n\alpha \in {\mathbb Z}[G]$.

Let $W_1, \ldots, W_r$ be the rational irreducible representations of $G$ and for each $W_l$ denote by $V_l$ a complex irreducible representation of $G$ associated to it. Following \cite{l-r}, the decomposition  of $1$ as the sum $e_1 + \cdots + e_r$ in $\mathbb{Q}[G],$ where $e_l$ is central idempotent associated to $W_l,$ yields a $G$-equivariant isogeny $$JS \sim A_{e_1} \times \cdots \times A_{e_r}.$$
 Moreover, for each $l$ there are idempotents $f_{l1},\dots, f_{ln_l}$ such that $e_l=f_{l1}+\dots +f_{ln_l}$ where  $n_l=d_{V_l}/s_{V_l}$ is the quotient of the degree $d_{V_l}$ of $V_l$ and its Schur index $s_{V_l}$. These idempotents provide $n_l$ pairwise isogenous subvarieties of $JS;$ let $B_l$ be one of them, for each $l.$ Thus, the following isogeny is obtained
\begin{equation} \label{eq:gadec}
JS \sim B_{1}^{n_1} \times \cdots \times B_{r}^{n_r} 
\end{equation}
and is called the {\it group algebra decomposition} of $JS$ with respect to $G$. See \cite{cr} and also \cite{RCR}.

If $W_1$ denotes the trivial representation then $n_1=1$ and $B_{1} \sim JS_G$.

\s

If $H \leqslant G$ then we  denote by $d_{V_l}^H$  the dimension of the vector subspace $V_l^H$ of $V_l$  of the elements fixed under $H.$ Following \cite{cr},   the group algebra decomposition   \eqref{eq:gadec} induces the following isogenies.

\begin{enumerate}
\item The Jacobian variety $JS_H$ of the quotient $S_H$ decomposes as\begin{equation} \label{decoind1}
JS_H \sim  B_{1}^{{n}_1^H} \times \cdots \times B_{r}^{n_r^H} \,\,\, \mbox{ where } \,\,\, {n}_l^H=d_{V_l}^H/s_{V_l}.
\end{equation}
\item Let $H_1 \leqslant H_2$ be subgroups of $G.$ The    Prym variety associated to  $S_{H_1} \to S_{H_2}$ decomposes as \begin{equation} 
\label{decoind2}
\mbox{Prym}(S_{H_1} \to S_{H_2}) \sim  B_{1}^{{n}_1^{H_1, H_2}} \times \cdots \times B_{r}^{{n}_r^{H_1, H_2}} \,\,\, \mbox{ where } \,\,\, {n}_l^ {H_1, H_2}=n_l^{H_1}-n_l^{H_2}.
\end{equation}
\end{enumerate}

The previous induced isogenies have been useful to provide decomposition of Jacobian varieties $JS$ whose factors are isogenous to Jacobians of quotients of $S$ and Pryms of intermediate coverings; see, for example, \cite{CMA}, \cite{CRC} and \cite{kanirubiyo}. 

Assume that $(\gamma; m_1, \ldots, m_l)$ is the signature of the action of $G$ on $S$  and that this action is represented by the surface-kernel epimorphism $\theta: \Delta \to G,$ with $\Delta$ canonically presented as in \eqref{prese}. As proved  in \cite[Theorem 5.12]{yoibero},  the dimension of $B_{i}$ in \eqref{eq:gadec} for  $i \geqslant 2$ is given by 
\begin{equation}\label{dimensiones1}
\dim B_{i}=k_{V_i}[d_{V_i}(\gamma -1)+\frac{1}{2}\Sigma_{k=1}^l (d_{V_i}-d_{V_i}^{\langle \theta(x_k) \rangle} )]  \end{equation} where $k_{V_i}$ is the degree of the extension $\mathbb{Q} \le L_{V_i}$ with $L_{V_i}$ denoting a minimal field of definition for $V_i.$ Note that the dimension of $B_1$ equals $\gamma.$

\s

The decomposition of Jacobian varieties with group actions goes back to old works of Wirtinger, Schottky and Jung. For decompositions of Jacobians with respect to special groups, we refer to the articles \cite{Ba}, \cite{d1}, \cite{Do}, \cite{nos}, \cite{IJR}, \cite{LR2}, \cite{PA}, \cite{d3} and \cite{Ri}.

\subsection*{\it Notation} We  denote the cyclic group of order $n$ by $C_n$ and the dihedral group of order $2n$ by $\mathbf{D}_n.$

\section{The three-dimensional case} \label{dim3}

\begin{theo} 
$N_3(g)=2g-2.$
\end{theo}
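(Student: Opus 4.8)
The plan is to establish the theorem in two steps, matching the "upper bound plus explicit construction" strategy announced in the introduction. First I would prove that no complex three-dimensional family of Riemann surfaces of genus $g$ can have strictly more than $2g-2$ automorphisms, for at least one (in fact all sufficiently large) value of $g$; second I would exhibit, for every $g \geqslant 2$, an explicit complex three-dimensional family whose members admit a group of automorphisms of order exactly $2g-2$.

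**For the upper bound**, the natural device is the Riemann–Hurwitz formula together with the dimension count for Teichmüller space. If $G$ acts on $S$ with signature $\sigma = (h; m_1, \ldots, m_l)$, then the associated family has complex dimension $3h-3+l$, so a three-dimensional family forces $3h-3+l = 3$, i.e. either $(h,l) = (0,6)$, $(h,l) = (1,3)$, or $(h,l)=(2,0)$. The last case corresponds to a surface Fuchsian group and gives no automorphisms, so it is discarded. In the remaining cases I would write Riemann–Hurwitz as
\begin{equation*}
\frac{2g-2}{|G|} = 2h-2 + \sum_{i=1}^{l}\left(1 - \tfrac{1}{m_i}\right),
\end{equation*}
and seek to \emph{minimize} the right-hand side subject to $3h-3+l=3$, since a smaller right-hand side means a larger $|G|$. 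For $(h,l)=(1,3)$ the quantity $2h-2+\sum(1-1/m_i) = \sum_{i=1}^{3}(1-1/m_i)$ is minimized by taking all $m_i=2$, giving $3/2$ and hence $|G| = \tfrac{2}{3}(2g-2) < 2g-2$. For $(h,l) = (0,6)$ one has $-2 + \sum_{i=1}^{6}(1-1/m_i)$; the infimum over admissible tuples is approached, but the smallest \emph{positive} value that can actually be realized while keeping things consistent (the left side must be $(2g-2)/|G|$ with $|G|$ an integer) needs care — the signature $(0;2,2,2,2,2,2)$ gives $2-2 = \ldots$ wait, $-2 + 6\cdot\tfrac12 = 1$, so $|G| = 2g-2$ exactly. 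So the extremal case is precisely $\sigma=(0;2,2,2,2,2,2)$ with $|G| = 2g-2$, and any genuinely larger order would require a smaller positive value of the RH right-hand side in one of these two configurations, which a short finite analysis of the possible branch tuples rules out (for $g$ large enough that sporadic small-genus coincidences do not intervene). \textbf{This finite case analysis of the $(0;6)$ and $(1;3)$ signatures is the main obstacle}: one must show that every admissible branch tuple other than $(0;2,2,2,2,2,2)$ yields $|G| < 2g-2$, which involves checking that no tuple produces a right-hand side strictly between $0$ and $1$ compatible with integrality of $|G|$ and with the existence of a surface-kernel epimorphism onto a group of the forced order.

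**For the construction**, I would exhibit, for each $g \geqslant 2$, a concrete group of order $2g-2$ acting with signature $(0; 2,2,2,2,2,2)$ on a genus-$g$ surface. A dihedral group $\mathbf{D}_{g-1}$ of order $2(g-1) = 2g-2$ is the obvious candidate, with a generating vector consisting of six involutions in $\mathbf{D}_{g-1}$ whose product is the identity — for instance three reflections $r, rs, \ldots$ interleaved appropriately, or more symmetrically $(r_1, r_2, r_3, r_3, r_2, r_1)$-type patterns adjusted so the product is trivial and the subgroup generated is all of $\mathbf{D}_{g-1}$. I would verify the surface-kernel condition (each $\theta(x_i)$ has order exactly $2$, product is $1$, images generate $G$) and compute the genus of the kernel from Riemann–Hurwitz, confirming it equals $g$. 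Since $3h-3+l = 3$, the resulting family is complex three-dimensional, establishing condition (1) of the definition of $N_d$. Combining the two steps gives $N_3(g) = 2g-2$. (The uniqueness, moduli-space description, and Jacobian isogeny decomposition promised in the introduction would then be taken up in the subsequent sections $\S\ref{s4}$, but are not needed for the statement itself.)
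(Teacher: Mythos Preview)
Your approach matches the paper's: Lemma \ref{L11} for the upper bound via Riemann--Hurwitz plus the dimension constraint $3h-3+l=3$, and Lemma \ref{L12} for the dihedral construction $\mathbf{D}_{g-1}$ with signature $(0;2,\stackrel{6}{\ldots},2)$. Two corrections, however, would streamline your argument considerably.

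First, the case $(h,l)=(2,0)$ does \emph{not} give ``no automorphisms'': it corresponds to an unramified $|G|$-fold cover of a genus-$2$ surface, and Riemann--Hurwitz yields $|G|=g-1<2g-2$. The case is harmless, not vacuous.

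Second, the ``main obstacle'' you flag does not exist. For $(h,l)=(0,6)$ the right-hand side $-2+\sum_{i=1}^{6}(1-1/m_i)$ is \emph{minimized} precisely when every $m_i=2$, where it equals $1$; any other tuple gives a strictly larger value and hence $|G|<2g-2$. There are no tuples with value in $(0,1)$ to analyse, no integrality or surface-kernel side conditions to invoke, and no ``sporadic small-genus coincidences'': the bound $|G|\leqslant 2g-2$ holds for every $g\geqslant 2$. The paper compresses this into the one-line contradiction that $|G|>2g-2$ would force $\sum_{j=1}^{6}1/m_j>3$, impossible since each $m_j\geqslant 2$. (Likewise for $(h,l)=(1,3)$ one gets $\sum_{j=1}^{3}1/m_j>2$.) Your hedging about ``$g$ large enough'' is therefore unnecessary. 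For the explicit generating vector in the construction, the paper simply takes $(s,s,s,s,sr,sr)$.
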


The proof of the theorem will follow directly from Lemmata \ref{L11} and \ref{L12} stated and proved below.
 
\begin{lemma} \label{L11} Let $g \geqslant 2$ be an integer. There are no complex three-dimensional families of compact Riemann surfaces of genus $g$ with strictly more than $2(g-1)$ automorphisms.
\end{lemma}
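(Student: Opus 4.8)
The plan is to argue by the Riemann–Hurwitz formula. Suppose $G$ acts on a Riemann surface $S$ of genus $g$ with signature $\sigma = (\gamma; m_1,\dots,m_l)$, and that the associated family (the Teichmüller space of a Fuchsian group of signature $\sigma$) is complex three-dimensional; this means $3\gamma - 3 + l = 3$, i.e. the only possibilities for $(\gamma,l)$ are $(0,6)$, $(1,3)$ and $(2,0)$. The case $(2,0)$ gives a surface Fuchsian group and hence $|G|=1$, which is irrelevant. So I only need to bound $|G|$ in the two cases $(0,6)$ and $(1,3)$, and show that in each case $|G| \le 2(g-1)$, with the understanding that we want a bound valid for all sufficiently many $g$ (more precisely, we want: there is no three-dimensional family with $> 2(g-1)$ automorphisms, for any fixed $g \geq 2$).

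**The main estimates.**

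First I would dispose of the easy case $(\gamma,l) = (1,3)$: Riemann–Hurwitz reads $2g - 2 = |G|\left(\sum_{i=1}^{3}(1 - \tfrac{1}{m_i})\right)$, and since each $m_i \geq 2$ the parenthesised sum is at least $3/2 > 1$, so $|G| < 2g-2$ immediately; done. The substantive case is $(\gamma,l) = (0,6)$, where $2g-2 = |G|\left(-2 + \sum_{i=1}^{6}(1 - \tfrac{1}{m_i})\right) = |G|\left(4 - \sum_{i=1}^{6}\tfrac{1}{m_i}\right)$. I want to show $|G| \leq 2(g-1)$, equivalently $4 - \sum \tfrac{1}{m_i} \geq 2$, i.e. $\sum_{i=1}^{6}\tfrac{1}{m_i} \leq 2$. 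This is \emph{not} automatically true — it fails e.g. for the signature $(0;2,2,2,2,2,2)$, where $\sum \tfrac1{m_i} = 3 > 2$. So the real content is to rule out, or separately handle, the finitely many "small" signatures with $\sum \tfrac{1}{m_i} > 2$, namely those having at least four indices equal to $2$. The strategy there is a standard one: if four of the $m_i$ equal $2$, one analyses the subgroup structure forced by the four commuting-or-not involutions coming from $\theta(x_i)$, and shows that such a $G$ in fact admits an action with a signature of \emph{smaller} dimension carrying at least as many automorphisms — i.e. the three-dimensional family is contained in (the closure of) a lower-dimensional one, contradicting the requirement that we are looking at a genuine three-dimensional family, or directly that the bound $2(g-1)$ is not exceeded because those surfaces already appear with the smaller signature and the relevant count is governed by $N_2, N_1$ or $N_0$. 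Concretely, one checks that $\langle \theta(x_1),\dots\rangle$ generates a normal (index $1$ or $2$) subgroup, and passes to the quotient orbifold.

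**The expected obstacle.**

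The hard part will be the exceptional signatures with many $2$'s — $(0;2,2,2,2,2,m)$, $(0;2,2,2,2,m_5,m_6)$ — where $\sum\tfrac1{m_i}$ can exceed $2$. For these I would argue that the existence of such a $G$-action forces the existence of a companion action of $G$ (or of an overgroup of $G$) with a signature of dimension $\le 2$ and with $|G|$ automorphisms, so that the corresponding surfaces are not generic points of a three-dimensional family at all: every such surface lies in the closure of a lower-dimensional equisymmetric stratum on which $G$ (or more) already acts. This uses the description of braid/automorphism actions from \S\ref{braid} and the elementary observation that a product of involutions generating a dihedral (or elementary abelian) subgroup lets one "merge" branch points. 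One then invokes the already-known values $N_0(g)=8g+8$, $N_1(g)=4g+4$, $N_2(g)=4g-4$ to see these contribute nothing new in the three-dimensional count, or more precisely that no genuinely three-dimensional family survives among them with $> 2(g-1)$ automorphisms. The remaining signatures all satisfy $\sum\tfrac1{m_i}\le 2$, whence $|G| = \dfrac{2g-2}{\,4-\sum\tfrac1{m_i}\,} \le 2(g-1)$, completing the proof.
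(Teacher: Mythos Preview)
Your argument has an arithmetic slip in the crucial case $(\gamma,l)=(0,6)$ that leads you to believe the problem is much harder than it is. From Riemann--Hurwitz you correctly have $2g-2 = |G|\bigl(4 - \sum_{i=1}^{6}\tfrac{1}{m_i}\bigr)$. The condition $|G|\le 2(g-1)$ is therefore equivalent to $4 - \sum \tfrac{1}{m_i} \ge 1$, i.e.\ $\sum_{i=1}^{6}\tfrac{1}{m_i}\le 3$, \emph{not} $\le 2$. Since each $m_i\ge 2$, one has $\sum \tfrac{1}{m_i}\le 3$ automatically, with equality precisely for the signature $(0;2,\stackrel{6}{\ldots},2)$ (which indeed gives $|G|=2(g-1)$, the family $\mathcal{F}_g$). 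To rule out $|G|>2(g-1)$ one needs the \emph{strict} inequality $\sum \tfrac{1}{m_i}>3$, which is impossible. Hence your entire ``expected obstacle'' --- the analysis of signatures with many $2$'s, companion actions, merging of branch points, and the appeal to $N_0,N_1,N_2$ --- is unnecessary and should be deleted; it is also not clear that the sketched reduction to lower-dimensional strata could be made rigorous as stated.

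There is also a minor error in your treatment of $(\gamma,l)=(2,0)$: the signature $(2;-)$ means the action is unramified, not that $|G|=1$. Riemann--Hurwitz gives $2g-2=|G|\cdot(2\cdot 2-2)=2|G|$, so $|G|=g-1<2(g-1)$ for $g\ge 2$. Once these two points are corrected, your argument collapses to exactly the paper's one-paragraph proof: assuming $|G|>2(g-1)$ forces $\sum_{j=1}^{l}\tfrac{1}{m_j} > 1+\tfrac{l}{3}$ with $l\in\{0,3,6\}$, and each of these is immediately contradicted by $m_j\ge 2$.
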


\begin{proof}
Assume the existence of a complex three-dimensional family of  Riemann surfaces $S$ of genus $g$ with a group of automorphisms $G$ of order strictly greater that $2(g-1).$ If the signature of the action of $G$ on $S$ is $(h; m_1, \ldots, m_l)$ then, by the Riemann-Hurwitz formula, we have that $$2(g-1) > 2(g-1)[2h-2+\Sigma_{j=1}^l(1- \tfrac{1}{m_j})],$$or, equivalently,  $\label{m1}\Sigma_{j=1}^l \tfrac{1}{m_j} > 2h+l-3.$ As the dimension $3h-3+l$ of the family  is assumed to be 3, \begin{equation}\label{m1}\Sigma_{j=1}^l \tfrac{1}{m_j} > 1+ \tfrac{l}{3} \,\, \mbox{ where }\,\, l\in \{0, 3, 6\}.\end{equation}If $l=0$ then \eqref{m1} turns into $0  > 1.$ Besides, if $l=3$ or $l=6$ then \eqref{m1} turns into $\Sigma_{j=1}^3\tfrac{1}{m_j} > 2$ and $\Sigma_{j=1}^6\tfrac{1}{m_j} > 3$ respectively. In both cases this contradicts the fact that each $m_j$ is at least 2.
\end{proof}

\begin{lemma} \label{L12} Let $g \geqslant 2$ be an integer. There is a complex three-dimensional family  of compact Riemann surfaces $S$ of genus $g$ with a group of automorphisms  $G$ isomorphic to the dihedral group of order $2(g-1)$ such that the signature of the action of $G$ on $S$ is $(0; 2, \stackrel{6}{\ldots}, 2).$
\end{lemma}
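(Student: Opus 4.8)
The plan is to produce the family via Riemann's existence theorem, by exhibiting an explicit surface-kernel epimorphism onto $\mathbf{D}_{g-1}$ from a Fuchsian group of signature $\sigma = (0; 2, \stackrel{6}{\ldots}, 2)$; three-dimensionality is then automatic from the signature. First I would record the numerology. Let $n = g-1$ and let $\Delta$ be a Fuchsian group of signature $\sigma$, canonically presented as in \eqref{prese} with elliptic generators $x_1, \dots, x_6$ subject to $x_1^2 = \cdots = x_6^2 = x_1 x_2 x_3 x_4 x_5 x_6 = 1$. Its Teichm\"{u}ller space has complex dimension $3\cdot 0 - 3 + 6 = 3$, so any non-empty family attached to $\sigma$ is three-dimensional. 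Moreover, if a group $G$ of order $2(g-1)$ acts with signature $\sigma$ on a surface of genus $\tilde g$, the Riemann-Hurwitz formula gives $2\tilde g - 2 = |G|\bigl(2\cdot 0 - 2 + 6\cdot \tfrac12\bigr) = |G| = 2(g-1)$, hence $\tilde g = g$.

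Next I would write down the epimorphism. Present $\mathbf{D}_n = \langle a, b \mid a^n = b^2 = 1,\ ba = a^{-1}b \rangle$ and define $\theta : \Delta \to \mathbf{D}_n$ on generators by
$$\theta(x_1) = \theta(x_2) = \theta(x_5) = \theta(x_6) = b, \qquad \theta(x_3) = \theta(x_4) = ba.$$
The verification reduces to three routine points: (i) $b$ and $ba$ have order exactly $2$ in $\mathbf{D}_n$ for every $n \geqslant 1$, so $\theta$ respects the relations $x_j^2 = 1$ and sends each $x_j$ to an element of order $m_j = 2$; (ii) $\theta(x_1)\cdots\theta(x_6) = (b\cdot b)(ba\cdot ba)(b\cdot b) = 1$ since $b^2 = (ba)^2 = 1$, so $\theta$ respects the long relation (the commutator part being empty as $h=0$); and (iii) $\theta$ is surjective, because $\theta(x_1)\theta(x_3) = b(ba) = a$ together with $\theta(x_1) = b$ generate $\mathbf{D}_n$. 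Thus $\theta$ is a surface-kernel epimorphism.

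Finally I would conclude. By Riemann's existence theorem $\Gamma := \ker\theta$ is a surface Fuchsian group with $\mathbb{H}/\Gamma$ of genus $g$ (by the Riemann-Hurwitz computation above), on which $\mathbf{D}_n \cong \Delta/\Gamma$ acts with signature $\sigma$. Hence the family of Riemann surfaces of genus $g$ carrying a $\mathbf{D}_{g-1}$-action of signature $\sigma$ is non-empty, and its complex dimension equals that of the Teichm\"{u}ller space of $\Delta$, namely $3$, which is the assertion. There is essentially no obstacle here beyond exhibiting one valid generating vector and checking (i)--(iii); a cleaner geometric incarnation --- $S$ as an unramified cyclic $C_{g-1}$-cover of a genus-$2$ curve, since the rotation subgroup acts freely and the quotient by it is a double cover of $\mathbb{P}^1$ branched over $6$ points --- together with the enumeration of the inequivalent such actions up to braid equivalence (which can be done with \textit{SageMath}), is deferred to the study of the family in \S\ref{s4}.
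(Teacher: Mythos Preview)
Your proof is correct and follows essentially the same route as the paper: exhibit an explicit surface-kernel epimorphism from a Fuchsian group of signature $(0;2,\stackrel{6}{\ldots},2)$ onto $\mathbf{D}_{g-1}$, check the Riemann--Hurwitz numerology, and invoke Riemann's existence theorem. The only cosmetic differences are your choice of generating vector $(b,b,ba,ba,b,b)$ versus the paper's $(s,s,s,s,sr,sr)$, and that the paper singles out $g=2$ (where $\mathbf{D}_1\cong C_2$) while your presentation handles it uniformly.
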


\begin{proof}
Let $\Delta$ be a Fuchsian group of signature $(0; 2, \stackrel{6}{\ldots}, 2)$ with canonical presentation \begin{equation*} \label{cenn}\Delta=\langle x_1, \ldots, x_6 : x_1^2 = \cdots = x_6^2=x_1\cdots x_6=1\rangle,\end{equation*}and consider the dihedral group $\mathbf{D}_{g-1}=\langle r, s : r^{g-1}=s^2=(sr)^2=1\rangle.$ Note that if $g \geqslant 3$ then  \begin{equation} \label{action1}\Delta \to \mathbf{D}_{g-1} \, \mbox{ given by } \, x_1, \ldots, x_4 \mapsto s \, \mbox{ and } \, x_5, x_6 \mapsto sr\end{equation}is a surface-kernel epimorphism of signature $(0; 2, \stackrel{6}{\ldots}, 2)$. If $g=2$ then the group is $C_2= \langle s \rangle$ and the surface-kernel epimorphism can be chosen to be $x_j \mapsto s$ for each $1 \leqslant j \leqslant 6.$ In addition, for each $g \geqslant 2,$ the equality $$2(g-1)=2(g-1)[0 - 2 + 6(1-\tfrac{1}{2} )]$$shows that the Riemann-Hurwitz formula is satisfied for a $2(g-1)$-fold regular covering map from a   Riemann surface of genus $g$ onto the projective line with six branch values marked with 2. 

Thus, the existence of the desired family  follows from Riemann's existence theorem.
\end{proof}

{\bf Notation.} From now on, we shall denote  the family  of all those surfaces $S$ of genus $g \geqslant 2$ with a group of automorphisms  $G$ isomorphic to the dihedral group of order $2(g-1)$ such that the signature of the action of $G$ on $S$ is $(0; 2, \stackrel{6}{\ldots}, 2)$ by $\mathcal{F}_g$.

%

\section{The family $\mathcal{F}_g$} \label{s4}

\begin{prop} \label{L13} Let $g \geqslant 3$ be an integer. 
If $g-1$ is a prime number then $\mathcal{F}_g$ is the unique complex three-dimensional family of compact Riemann surfaces of genus $g$ with $2(g-1)$ automorphisms.
\end{prop}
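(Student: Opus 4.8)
The plan is to show that the three hypotheses of the statement --- that the family is three--dimensional, that $|G|=2(g-1)$, and that $g-1$ is prime --- leave no freedom at all in the isomorphism type of the acting group $G$ nor in the signature of its action. Since, by the conventions recalled in Section \S\ref{s2}, a family is completely determined by the pair consisting of a group and a signature, it then suffices to prove that the pair $(G,\sigma)=(\mathbf{D}_{g-1},(0;2,\stackrel{6}{\ldots},2))$ is forced; the family attached to that pair is exactly $\mathcal{F}_g$, which is non-empty by Lemma \ref{L12}.

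\emph{Forcing the signature.} Let $\mathcal{F}$ be a complex three--dimensional family of Riemann surfaces of genus $g$ carrying a group $G$ of automorphisms with $|G|=2(g-1)$, acting with signature $(h;m_1,\ldots,m_l)$. Since the dimension of the family equals the dimension $3h-3+l$ of the corresponding Teichm\"uller space, I would first record that $3h-3+l=3$, so $(h,l)\in\{(2,0),(1,3),(0,6)\}$. Because $|G|=2(g-1)$ \emph{exactly}, the Riemann--Hurwitz formula now yields the \emph{equality} $1=2h-2+\sum_{j=1}^{l}(1-\tfrac{1}{m_j})$, in contrast with the strict inequality used in the proof of Lemma \ref{L11}. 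The case $(h,l)=(2,0)$ would force $h=3/2$; the case $(h,l)=(1,3)$ would force $\sum_{j=1}^{3}(1-\tfrac{1}{m_j})=1$, impossible as each summand is at least $\tfrac12$; and the case $(h,l)=(0,6)$ gives $\sum_{j=1}^{6}(1-\tfrac{1}{m_j})=3$, which --- again because each summand is at least $\tfrac12$ --- forces all six summands to equal $\tfrac12$, i.e. $m_1=\cdots=m_6=2$. Hence the signature must be $(0;2,\stackrel{6}{\ldots},2)$.

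\emph{Forcing the group.} By Riemann's existence theorem there is a surface--kernel epimorphism $\theta:\Delta\to G$ from a Fuchsian group $\Delta$ of signature $(0;2,\stackrel{6}{\ldots},2)$, and torsion--freeness of $\ker\theta$ makes each of the six images $\theta(x_j)$ an element of order exactly two; in particular $G$ is generated by involutions. A cyclic group contains at most one involution, so a cyclic group generated by involutions has order at most two; since $|G|=2(g-1)\geqslant 4$ (because $g\geqslant 3$), the group $G$ is non--cyclic. Writing $p=g-1$, which is prime by hypothesis, the classification of groups of order $2p$ shows that $G$ is either cyclic or dihedral, the non--cyclic option being $\mathbf{D}_{p}$ in every case (for $p=2$ this is $\mathbf{D}_2\cong C_2\times C_2$). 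Therefore $G\cong\mathbf{D}_{g-1}$, and combining this with the previous paragraph, any complex three--dimensional family of genus $g$ with $2(g-1)$ automorphisms is precisely $\mathcal{F}_g$.

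I do not expect any step to be genuinely hard; the only place where the primality of $g-1$ is essential is the identification of $G$, since without it there are many groups of order $2(g-1)$ generated by six involutions, and correspondingly one should expect other three--dimensional families realizing the bound $2(g-1)$ to appear. A small point worth stating explicitly is that, because a family is determined by its group--and--signature data, pinning down $(\mathbf{D}_{g-1},(0;2,\stackrel{6}{\ldots},2))$ already gives uniqueness, and there is no need to analyse the topological equivalence classes of $\theta$ at this stage.
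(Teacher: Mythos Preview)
Your proof is correct and follows essentially the same approach as the paper: both force the signature to be $(0;2,\stackrel{6}{\ldots},2)$ via the dimension constraint $3h-3+l=3$ combined with the Riemann--Hurwitz equality, then use the classification of groups of order $2p$ ($p$ prime) together with the observation that $G$ must be generated by involutions to rule out the cyclic case. Your treatment is slightly more explicit in enumerating the $(h,l)$ possibilities and in handling $g=3$ separately, but the argument is the same.
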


\begin{proof} Set $q=g-1.$ Let $\mathcal{F}$ be a complex three-dimensional family of  Riemann surfaces of genus $g$ with a group of  automorphisms $G$ of order $2q.$ By considering the Riemann-Hurwitz formula and by arguing similarly as done in the proof of Lemma \ref{L11}, one sees that the unique solution of $$1=2h-2+\Sigma_{j=1}^l(1-\tfrac{1}{m_j})$$is $h=0, l=6$ and $m_j=2$ for each $1 \leqslant j \leqslant 6$. Thus, the signature of the action of $G$ on each $S \in \mathcal{F}$ is necessarily equal to $(0; 2, \stackrel{6}{\ldots}, 2).$ If we now assume $q$ to be prime then $G$ is isomorphic to either the dihedral group or the cyclic group. We claim that the latter case is impossible. In fact, otherwise 
there would exist a surface-kernel epimorphism $\Delta \to C_{2q}$ where $\Delta$ is a Fuchsian group of signature $(0; 2, \stackrel{6}{\ldots}, 2).$ This, in turn, would imply that $C_{2q}$ can be generated by involutions; a contradiction. It follows that $G$ is isomorphic to the dihedral group and therefore $\mathcal{F}$ agrees with the family $\mathcal{F}_g$ as desired.
\end{proof}

\begin{prop}\label{L14} Let $g \geqslant 4$ be an integer. If $g-1$ is a prime number  then $\mathcal{F}_g$ is equisymmetric.
\end{prop}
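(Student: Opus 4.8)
The plan is to show that, under the hypothesis that $q=g-1$ is prime, the action of $\mathbf{D}_{q}$ with signature $(0;2,\stackrel{6}{\ldots},2)$ is unique up to topological equivalence; by definition this is exactly what it means for $\mathcal{F}_g$ to be equisymmetric (its interior is a single equisymmetric stratum, the stratum being non-empty by Lemma \ref{L12} and the interior being non-empty because a generic member has full automorphism group $\mathbf{D}_q$—a point one should check, but which follows from the dimension count in Lemma \ref{L11} together with Proposition \ref{L13}). So the heart of the matter is to classify, up to the equivalence $\theta_2=\omega\circ\theta_1\circ f^{*}$ with $\omega\in\mathrm{Aut}(\mathbf{D}_q)$ and $f^{*}\in\mathfrak{B}$, all surface-kernel epimorphisms $\theta:\Delta\to\mathbf{D}_q$ where $\Delta$ has signature $(0;2,\stackrel{6}{\ldots},2)$.

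First I would record the constraints on a generating vector $\theta=(t_1,\ldots,t_6)$: each $t_i$ is an involution of $\mathbf{D}_q$, the product $t_1\cdots t_6=1$, and the $t_i$ generate $\mathbf{D}_q$. Since $q$ is an odd prime, the involutions of $\mathbf{D}_q=\langle r,s\mid r^{q}=s^2=(sr)^2=1\rangle$ are exactly the $q$ reflections $sr^{k}$, $0\le k\le q-1$ (there is no central involution). Writing $t_i=sr^{a_i}$, the product condition $\prod_{i=1}^{6} sr^{a_i}=1$ becomes, upon using $sr^{a}sr^{b}=r^{b-a}$, the single congruence $(a_2-a_1)+(a_4-a_3)+(a_6-a_5)\equiv 0 \pmod q$ (one must be a little careful with the bookkeeping of signs when pairing up the six factors, but it reduces to one linear equation in $\mathbb{Z}/q$). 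The generation condition says that the differences $a_i-a_j$ are not all $\equiv 0$, i.e. the $t_i$ are not all equal. Next I would use the braid group $\mathfrak{B}$ generated by the transformations $\Phi_i$ of \eqref{bd}: a braid move on a pair of reflections $(sr^{a},sr^{b})$ sends it to $(sr^{b}, sr^{2b-a})$, which in the exponent language is the standard action permuting and shifting the $a_i$'s; together with $\mathrm{Aut}(\mathbf{D}_q)$—which, $q$ being prime, acts on the reflections' exponents as the full affine group $k\mapsto \lambda k+\mu$ with $\lambda\in(\mathbb{Z}/q)^{\times}$, $\mu\in\mathbb{Z}/q$—one can normalize: first translate so that $a_1=0$, then use the braid/affine action to bring the multiset of exponent-differences into a canonical form. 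The goal is to show every admissible vector is equivalent to the specific vector $(s,s,s,s,sr,sr)$ of \eqref{action1}, perhaps after separating a small number of a priori cases according to how many distinct reflections occur among the $t_i$ and using $q>3$ (i.e. $g\ge 4$) to collapse the would-be extra cases—this is presumably where the hypothesis $g\ge 4$ rather than $g\ge 3$ enters, matching the analogous phenomenon flagged in the introduction for the $N_1$ and $N_0$ classifications.

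The step I expect to be the main obstacle is the orbit computation: proving that the combined action of the affine group $\mathrm{AGL}_1(\mathbb{F}_q)$ and the braid transformations on the set of admissible $6$-tuples of reflections (subject to the one product relation and the non-triviality of generation) is transitive. The braid moves do not simply permute the $a_i$; the move $(a,b)\mapsto(b,2b-a)$ genuinely changes exponents, so one has to argue that repeated braid moves, combined with affine changes, can equalize as many of the $a_i$ as possible and then match the pattern $(0,0,0,0,c,c)$ with $c\neq 0$, after which a single affine map $k\mapsto c^{-1}k$ finishes it. I would handle this by reducing modulo the affine action to a normal form for the difference-multiset, then checking by a short finite analysis (feasible by hand, and cross-checkable in \emph{SageMath} as the paper allows) that there is exactly one braid-orbit; the primality of $q$ is what guarantees that $(\mathbb{Z}/q)^{\times}$ acts transitively on nonzero exponents and hence prevents the orbit from splitting.
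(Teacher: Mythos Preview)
Your proposal is correct and follows essentially the same approach as the paper: both reduce the classification of surface-kernel epimorphisms $\theta:\Delta\to\mathbf{D}_q$ with signature $(0;2,\stackrel{6}{\ldots},2)$ to an orbit computation under the combined action of $\mathrm{Aut}(\mathbf{D}_q)\cong\mathrm{AGL}_1(\mathbb{F}_q)$ and the braid transformations $\Phi_i$, working with the exponent tuple $(a_1,\ldots,a_6)$ of the reflections $sr^{a_i}$. The paper carries out the explicit case-by-case reduction you describe as ``a short finite analysis'': it first normalizes $a_1=a_2=0$ (your translation step), then handles the remaining exponents via a key observation you do not isolate, namely that the braid-plus-scaling move $u\mapsto 2-1/u$ on $\mathbb{F}_q^{\times}$ has a single orbit of length $q-1$, which is precisely what forces the collapse of cases and is where primality of $q$ is used decisively.
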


\begin{proof} Set $q=g-1$ and assume $q$ to be prime. Let $\theta: \Delta \to \mathbf{D}_{q}=\langle r, s : r^q=s^2=(sr)^2=1\rangle$ be a surface-kernel epimorphism representing an action of $G$ on $S \in \mathcal{F}_g.$ What we need to prove is that $\theta$ is equivalent to the surface-kernel epimorphism \eqref{action1} of Lemma \ref{L12}. To accomplish this task we shall introduce some notation. We write $$sr^{n_j}=\theta(x_j) \, \mbox{ where } \,  j=1, \ldots, 6 \, \mbox{ and } \, n_j \in \{0, \ldots, q-1\},$$and if $n_j \neq 0$ then we shall denote by $m_j$ its  inverse in the field of $q$ elements. Also, we denote by $\phi_{\alpha,\beta}$ the automorphism of $\mathbf{D}_q$ given by $(r,s) \mapsto (r^{\alpha}, sr^{\beta})$ for $1 \leqslant \alpha \leqslant q-1$ and $0 \leqslant \beta \leqslant q-1.$

\s

{\bf Claim 1.} Let $j \in \{1, \ldots, 5\}$ fixed. If $n_j=1$ and $n_k=0$ for all  $k < j$ then, up to equivalence, we can assume that $n_{j+1}=0$ or $n_{j+1}=1.$

\s

Assume $n_{j+1} \neq 0.$  Then the transformation $\phi_{m_{j+1},0} \circ \Phi_{j}$ induces the correspondence $$(s, \stackrel{j-1}{\ldots}, s,sr,sr^{n_{j+1}}) \to (s, \stackrel{j-1}{\ldots}, s,sr,sr^{f(n_{j+1})}) \,\, \mbox{ where } f(u)=2-\tfrac{1}{u}.$$ The claim follows by noting that the rule $u \mapsto f(u) $ fixes 1 and has an orbit of length is $q-1.$ 


\s

{\bf Claim 2.} Up to equivalence, we can assume $n_1=n_2=0.$

\s

Note that if $n_1=n_2$ then it is enough to consider  $\phi_{1, -n_1}$ to obtain the claim. Thus, we shall assume that $n_1 \neq n_2.$  If $\alpha:=(n_2-n_1)^{-1}$ and $\beta:=n_1(n_1-n_2)^{-1}$ (where the inverses are taken in the field of $q$ elements) then  the automorphism $\phi_{\alpha, \beta}$ ensures that, up to equivalence, $n_1=0$ and $n_2=1.$ Now:

\begin{enumerate}
\item[(a)] if $n_3=0$ then $\Phi_{2}$ shows that we can assume $n_1=n_2=0,$ and 
\item[(b)] if $n_3 \neq 0$ then, by Claim 1, we can assume $n_3=1.$ We now apply $\Phi_2 \circ \Phi_1 \circ \phi_{-1,1}$ to obtain that, up to equivalence,  $n_1=n_2=0.$ 
\end{enumerate}

The proof of the claim is done.

\s

We proceed by studying two cases separately, according to $n_3=0$ or $n_3 \neq 0.$

\s

{\it Type 1.} Assume that $n_3=0.$ 
\begin{enumerate}
\item[(a)] If $n_4=0$ then necessarily $n_5$ and $n_6$ are equal and different from zero. We consider $\phi_{m_5, 0}$ to obtain that $\theta$ is equivalent to \eqref{action1}. 
\item[(b)] If $n_4 \neq 0$ then, we consider $\phi_{m_4, 0}$ to assume $n_4=1.$ Now, by Claim 1, we can ensure that $n_5=0$ or $n_6=1;$ thus, $\theta$ is equivalent to either \begin{equation} \label{case1}\theta_1=(s,s,s,sr,s,sr^{-1}) \,\, \mbox{ or }\,\, \theta_2=(s,s,s,sr,sr,s).\end{equation}Note that $\theta_1$ and $\theta_2$ are equivalent under $\Phi_5$ and that, in turn, $\theta_1$  is equivalent to \eqref{action1} under the action of $\phi_{-1,0} \circ \Phi_3.$
\end{enumerate}

\s

{\it Type 2.} Assume that $n_3 \neq 0.$ As before, by considering the automorphism $\phi_{m_3, 0},$ we can assume $n_3=1.$ It follows, by Claim 1, that $n_4=0$
 or $n_4=1.$ The first case can be disregarded, since $\phi_{-1,0} \circ \Phi_3$ provides an equivalence with \eqref{case1}. Now, if $n_4=1$ then $\theta$ is equivalent to $$\theta_u=(s,s,sr,sr,sr^u, sr^{u}) \,\, \mbox{ for some }\,\, u \in \{0, \ldots, q-1\}.$$

 \begin{enumerate}
\item if $u \neq \pm 1$ then define $\alpha_u$ and $\beta_u$ by $\alpha_u(1-u) \equiv 1 \mbox{ mod }q$ and $\beta_u(1+u) \equiv 1 \mbox{ mod }q.$ The transformation $ \phi_{\beta_u, 0} \circ \Phi_{4}^{\beta_u} \circ \Phi_{5} \circ \Phi_{3} \circ \Phi_{4}^{\alpha_u}$ shows that $\theta_u$ is equivalent to \eqref{action1}.
\s
\item if $u=1$ or $u=-1$ then we consider the transformations $$\Phi_{4} \circ \Phi_{3} \circ\Phi_{2} \circ\Phi_{1} \circ \Phi_{5} \circ \Phi_{4} \circ \Phi_{3} \circ\Phi_{2} \circ\phi_{-1, 1} \,\, \mbox{ and }\,\, \Phi_{4} \circ\Phi_{5}^2 \circ\Phi_{3} \circ \Phi_{4}^{\alpha}$$ respectively (where $2\alpha=1$) to see that $\theta_u$ is equivalent to \eqref{action1}.
\end{enumerate}The proof of the proposition is done.
\end{proof}

\s

We shall denote the equisymmetric stratum corresponding to the action \eqref{action1}  by  $\mathcal{F}_{g,1}$. With this terminology the previous proposition can be rephrased as  $$g-1 \, \mbox{ odd prime } \implies \mathcal{F}_{g,1}=\mathcal{F}_g.$$

In order to state the following proposition we need some notation. For each integer $n \geqslant 2$ we write $$\Omega(n)=\{d \in \mathbb{Z}: d \text{ divides } n \text{ and } 1 \leqslant d < n\}$$and for each $n \geqslant 2$ even we write $$\hat{\Omega}(n)=\{d \in \mathbb{Z}: d \text{ divides } n \text{ and } 1 \leqslant d < \tfrac{n}{2}\}.$$ Let $\varphi$ denote the Euler function.

%

\begin{prop}\label{desco3dim} Let $g \geqslant 4$ be an integer. 
\begin{enumerate}
\item[(a)] If $g-1$ is odd and $S \in \mathcal{F}_g$ then $JS$  decomposes, up to isogeny, as\begin{equation*}\label{blanco1}JS \sim A \times \Pi_{d\in \Omega(g-1)}B_d^2\end{equation*}
where $A$ is an abelian surface and $B_d$ is an abelian variety of dimension $\tfrac{1}{2}\varphi(\tfrac{g-1}{d}).$ Moreover $$JS_{\langle r \rangle} \sim A \,\, \mbox{ and } \,\, JS_{\langle s \rangle} \sim   \Pi_{d \in \Omega(g-1)}B_d$$and therefore $JS \sim JS_{\langle r \rangle}  \times JS_{\langle s \rangle}^2.$

\s

\item[(b)] If $g-1$ is even and $S \in \mathcal{F}_{g,1}$ then $JS$  decomposes, up to isogeny, as\begin{equation*}\label{blanco2}JS \sim E \times A  \times \Pi_{d \in \hat{\Omega}(g-1)}B_d^2,\end{equation*}
where $A$ is an abelian surface, $B_d$ is an abelian variety of dimension $\tfrac{1}{2}\varphi(\tfrac{g-1}{d})$ and $E$ is an elliptic curve. Moreover, $$JS_{\langle r \rangle} \sim A, \,\,  JS_{\langle s \rangle} \sim   \Pi_{d \in \hat{\Omega}(g-1)}B_d \,\, \mbox{ and } \,\, JS_{\langle sr \rangle} \sim E \times  \Pi_{d \in \hat{\Omega}(g-1)}B_d$$and therefore $JS \sim JS_{\langle r \rangle}  \times JS_{\langle s \rangle} \times JS_{\langle sr \rangle}.$

\end{enumerate}
\end{prop}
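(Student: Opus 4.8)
The plan is to apply the group algebra decomposition of $JS$ with respect to $G=\mathbf{D}_{g-1}$ described in Section \S\ref{jacos}, together with the dimension formula \eqref{dimensiones1} and the induced isogenies \eqref{decoind1} for quotients. First I would recall the rational (and complex) representation theory of $\mathbf{D}_n$. When $n$ is odd, the irreducible $\mathbb{C}$-representations are: the trivial one $W_1$, the sign representation $\chi$ (with $r \mapsto 1$, $s \mapsto -1$), and $\tfrac{n-1}{2}$ two-dimensional representations $\rho_k$; the rational irreducibles group the $\rho_k$ by the divisors $d$ of $n$, one rational irreducible $W_d$ for each $d\in\Omega(n)\cup\{n\}$ of complex dimension $\varphi(n/d)$ (with the convention that $d=n$ contributes $W_1\oplus\chi$ or rather the two linear characters). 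When $n$ is even there are in addition a second sign character and a fourth linear character, accounting for the extra elliptic curve $E$. All the $\rho_k$ have Schur index $1$, so the multiplicities $n_l$ in \eqref{eq:gadec} equal the complex dimensions, i.e. $n_l=2$ for the two-dimensional pieces; this yields the exponent $2$ on each $B_d$.

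Next I would compute the dimensions of the factors using \eqref{dimensiones1} with $\gamma=0$, $l=6$, and the generating vector \eqref{action1}: $\theta(x_1)=\dots=\theta(x_4)=s$ and $\theta(x_5)=\theta(x_6)=sr$, so every $\theta(x_k)$ is a reflection and $\langle\theta(x_k)\rangle$ has order $2$. For a two-dimensional $\rho_k$ one has $d_{\rho_k}=2$ and $d_{\rho_k}^{\langle\theta(x_k)\rangle}=1$ (a reflection fixes a line), hence the bracket in \eqref{dimensiones1} is $2(0-1)+\tfrac12\cdot 6\cdot(2-1)=1$; multiplying by $k_{\rho_k}=\varphi(n/d)/2$ (the degree of the field of definition, since the $\rho_k$ pooled into $W_d$ are Galois-conjugate) gives $\dim B_d=\tfrac12\varphi(\tfrac{n}{d})$. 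Here $n=g-1$. For the linear characters one gets, by the same formula, $\dim=0$ for those $\varepsilon$ with $\varepsilon(s)=1$ except the trivial one, and $\dim=1$ for those with $\varepsilon(s)=-1$ (the bracket becomes $-1+\tfrac12(6\cdot 1)=2$... so more care: one needs to count how many of the six reflections lie in the kernel of $\varepsilon$). For the sign character $\chi$ with $\chi(s)=-1$, no $\theta(x_k)$ is fixed, so the bracket is $0\cdot(-1)+\tfrac12(6-0)=3$; in the even case the fourth character with $\varepsilon(r)=-1,\varepsilon(s)=-1$ sends $s\mapsto -1$, $sr\mapsto 1$, contributing $\tfrac12(4\cdot 1+2\cdot 0)=2$... one must instead assemble the abelian surface $A$ from the piece $A_{e_\chi}$ of dimension matching $JS_{\langle r\rangle}$, and $E$ from the extra character. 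Rather than fight the bookkeeping I would organize it as: $A:=JS_{\langle r\rangle}$, which by Riemann--Hurwitz has dimension $2$ (the quotient $S/\langle r\rangle$ has genus $2$), and $E:=\mathrm{Prym}(S/\langle r\rangle\to S/G)$ or the piece cut out by the remaining linear character, an elliptic curve.

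The cleanest route for the ``moreover'' statements is to use \eqref{decoind1}: for $H=\langle r\rangle$ (cyclic of order $n$, normal) the fixed-space dimensions $d_{V_l}^H$ vanish on every $\rho_k$ and on $\chi$, and equal $1$ only on the trivial and possibly one other linear character, forcing $JS_{\langle r\rangle}\sim A$ (two-dimensional) — matching the genus-$2$ computation via Riemann--Hurwitz. For $H=\langle s\rangle$, a reflection subgroup of order $2$, one has $d_{\rho_k}^H=1$ for each two-dimensional $\rho_k$ and $d^H=1$ on exactly the linear characters trivial on $s$; assembling gives $JS_{\langle s\rangle}\sim\prod_{d}B_d$ with the right exponents $1$. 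The $\langle sr\rangle$ case (only when $g-1$ is even, where $sr$ is not conjugate to $s$) similarly produces the extra $E$ factor. Then $JS\sim JS_{\langle r\rangle}\times JS_{\langle s\rangle}^2$ (odd case), respectively $JS\sim JS_{\langle r\rangle}\times JS_{\langle s\rangle}\times JS_{\langle sr\rangle}$ (even case), follows by comparing these with the full group algebra decomposition \eqref{eq:gadec} and checking the exponents and dimensions agree — a dimension count on both sides closes the argument. The main obstacle I anticipate is purely organizational: carefully distinguishing the odd and even cases in the character table of $\mathbf{D}_{g-1}$, correctly tracking which of the (up to four) linear characters contribute $0$, $1$, or $2$ to the decomposition given the specific generating vector \eqref{action1}, and confirming that in the even case exactly one of them yields the elliptic curve $E$ while the abelian surface $A$ remains the Jacobian of the genus-$2$ quotient $S/\langle r\rangle$; once the linear pieces are pinned down, the two-dimensional part is routine.
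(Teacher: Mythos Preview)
Your approach matches the paper's: apply the group algebra decomposition \eqref{eq:gadec}, compute factor dimensions via \eqref{dimensiones1}, then identify factors with quotient Jacobians using \eqref{decoind1}. The two-dimensional pieces you handle correctly.

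Your fumbling on the linear characters has a concrete cause: you are dropping the $d_{V_i}(\gamma-1)$ term in \eqref{dimensiones1}. For a one-dimensional $V$ with $\gamma=0$ this contributes $1\cdot(-1)=-1$, not $0\cdot(-1)$. Thus for the sign character $\chi_2$ (with $\chi_2(s)=\chi_2(sr)=-1$, so $d_{\chi_2}^{\langle\theta(x_k)\rangle}=0$ for all six $k$) the bracket is $-1+\tfrac12\cdot 6\cdot 1=2$, not $3$; this is your abelian surface $A$. In the even case, with $\chi_3(r)=-1,\chi_3(s)=1$ and $\chi_4(r)=-1,\chi_4(s)=-1$, the same correction gives $\dim B_3=-1+\tfrac12(4\cdot 0+2\cdot 1)=0$ and $\dim B_4=-1+\tfrac12(4\cdot 1+2\cdot 0)=1$; set $A=B_2$ and $E=B_4$. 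With these values in hand your outline via \eqref{decoind1} for $H=\langle r\rangle,\langle s\rangle,\langle sr\rangle$ goes through exactly.

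One further point you do not address but the paper does: in part (a) the statement is for every $S\in\mathcal{F}_g$, not only for the stratum $\mathcal{F}_{g,1}$ represented by \eqref{action1}. The paper handles this by observing that when $n=g-1$ is odd all involutions of $\mathbf{D}_n$ are conjugate, so the numbers $d_{V_i}^{\langle\theta(x_k)\rangle}$ --- and hence the factor dimensions --- do not depend on which generating vector one uses. You should include this remark before specializing to \eqref{action1}.
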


\begin{proof} We write $n:=g-1.$  We assume that $n$ is odd. It is well-known that the complex irreducible representations of $\mathbf{D}_{n}= \langle r, s : r^{n}=s^2=(sr)^2=1 \rangle$ are, up to equivalence:
 \begin{enumerate}
\item two of degree $1$: the trivial representation denoted by $\chi_1$ and $\chi_2 : r \mapsto 1, \,\, s \mapsto -1.$
\item $\frac{n-1}{2}$ of degree $2$, given by$$\psi_j: r \mapsto  \mbox{diag}(\omega^j, \bar{\omega}^j)
 \,\, \mbox{ and } \,\,  s \mapsto  \left( \begin{smallmatrix}
0 & 1  \\
1 & 0  \\
\end{smallmatrix} \right),$$where $\omega$ is a primitive $n$-root of unity and 
$j=1,\ldots, \tfrac{n-1}{2}.$
\end{enumerate}
\s
For $d \in \Omega(n),$ we denote by $K_d$ the character field of $\psi_d$ (an extension of $\mathbb{Q}$ of degree $\frac{1}{2}\varphi(\frac{n}{d})$) and define \begin{equation} \label{repreW}W_d := \oplus_{\sigma \in G_d}\psi_d^{\sigma},\end{equation}where $G_d$ stands for the Galois group associated to $\mathbb{Q} \leqslant K_d.$ Following for example \cite[Section 2]{IJR}, up to equivalence,  the rational  irreducible representations of $\mathbf{D}_n$ are $ \chi_1,  \chi_2$ and $W_d$ with $d \in \Omega(n).$

We recall that the Schur index of each representation of a dihedral group equals 1. Thus,  if $S \in \mathcal{F}_g$ then the group algebra decomposition of $JS$ with respect to $G$ is \begin{equation} \label{negro1}JS \sim B_2 \times \Pi_{d \in \Omega(n)}B_d^2,\end{equation}where  the factor $B_1$ is disregarded since the genus of $S_G$ is zero. 

Note that as $n$ is assumed to be odd, all the involutions of $\mathbf{D}_n$ are pairwise conjugate and therefore the dimension of the corresponding fixed subspaces agree. This simple fact implies that the dimension of each factor in \eqref{negro1} does not depend on the equisymmetric stratum to which $S$ belongs.
Then, in order to apply the formula \eqref{dimensiones1} we only need to compute the dimension of the fixed subspaces of $\chi_2$ and $\psi_d$ under the action of  $\langle s \rangle.$ In the former case we have that $$\chi_2^{\langle s \rangle}=0 \,\, \mbox{ and therefore} \,\, \dim B_2=-1+\tfrac{1}{2}(6(1-0))=2,$$meanwhile in the latter case, for each $d \in \Omega(n),$ we have $$\psi_d^{\langle s \rangle}=1\,\, \mbox{ and therefore} \,\,\dim B_d=\tfrac{1}{2}\varphi(\tfrac{n}{d})(-2+\tfrac{1}{2}(6(2-1))=\tfrac{1}{2}\varphi(\tfrac{n}{d}).$$

Finally, we apply the induced isogeny \eqref{decoind1} with $H= \langle r \rangle$ and $H=\langle s \rangle$ to obtain that $JS_{\langle r \rangle} \sim B_2$ and $JS_{\langle s \rangle} \sim   \Pi_{d \in \Omega(n)}B_d$ respectively. The proof of the statement (a) follows after setting $A=B_2.$

\s

We now assume that $n$ is  even and proceed analogously. The complex irreducible representations of $\mathbf{D}_{n}$ are, up to equivalence,  the trivial one $\chi_1$,   $$\chi_2 : r \mapsto 1, \,\, s \mapsto -1, \,\, \chi_3 : r \mapsto -1, s \mapsto 1 \,\mbox{ and }\, \chi_4 : r \mapsto -1, s \mapsto -1. $$and $\frac{n}{2}-1$ of degree $2$, given by $\psi_j$ with 
$j=1,\ldots, \tfrac{n}{2}-1.$

\s

Up to equivalence, the rational  irreducible representations of $\mathbf{D}_n$ are $ \chi_1,  \chi_2, \chi_3, \chi_4$ and $W_d$ with $d \in \hat{\Omega}(n).$  If $S \in \mathcal{F}_{g,1}$ then the  group algebra decomposition of $JS$ with respect to $G$ is  \begin{equation*} \label{negro2}JS \sim B_2 \times B_3 \times B_4 \times \Pi_{d \in \hat{\Omega}(n)}B_d^2,\end{equation*}where, as before, $B_1$ is not considered. Note that $$\chi_2^{\langle s \rangle}=\chi_2^{\langle sr \rangle}=0, \,\, \chi_3^{\langle s \rangle}=1, \, \chi_3^{\langle sr \rangle}=0 \,\, \mbox{ and } \,\, \chi_4^{\langle s \rangle}=0, \, \chi_4^{\langle sr \rangle}=1$$and for each $d \in \hat{\Omega}(n)$ we have that $ \psi_d^{\langle s \rangle}= \psi_d^{\langle sr \rangle}=1.$ Then, we apply \eqref{dimensiones1} to conclude that $$\dim B_2 =2, \,\, \dim B_3 =0, \,\, \dim B_4=1 \, \mbox{ and }\,\dim B_d= \tfrac{1}{2}\varphi(\tfrac{n}{d}).$$

Finally, we consider the induced isogeny \eqref{decoind1} with $H= \langle r \rangle$, $H=\langle s \rangle$ and $H=\langle sr \rangle$ to obtain that  $JS_{\langle r \rangle} \sim B_2, JS_{\langle s \rangle} \sim   \Pi_{d \in \hat{\Omega}(n)}B_d$ and $JS_{\langle sr \rangle} \sim B_4 \times  \Pi_{d \in \hat{\Omega}(n)}B_d$ respectively. The proof of the statement (b) follows after setting $E=B_4$ and $A=B_2.$
\end{proof}

\begin{rema}
We end this section by pointing out some remarks concerning the family $\mathcal{F}_g.$

\begin{enumerate}
\item Note that if $g-1$ is an odd prime (or, more generally, odd) then $\mathbf{D}_{g-1}$ does not contain central subgroups of order two. Thus, generically, each $S \in \mathcal{F}_g$ is non-hyperelliptic.
\s

\item If $g-1$ is prime then $\mathcal{F}_g$ corresponds to the family of  Riemann surfaces of genus $g$ that are cyclic unbranched covers of  Riemann surfaces of genus two.
\s
\item The family $\mathcal{F}_3$ consists of two equisymmetric strata: one of them represented by \eqref{action1} and the other represented by $(s,s,r,r,sr,sr)$. See \cite[Table 5, 3.h]{Brou}.

\s

\item If $g-1$ is not prime then Proposition \ref{L14} is not longer true. For instance, if $g-1$ is even, then $\theta_c:=(r^{(g-1)/2}, r^{(g-1)/2}, s,s,sr,sr)$ defines an action which is non-equivalent to \eqref{action1}.
\s
\item  For each $g \geqslant 2,$ the (closed) family $\mathcal{F}_g$ contains the complex two-dimensional family with the maximal possible number of automorphisms (see \cite{yo}), which does not lie in the interior of  $\mathcal{F}_g$ (see  Subsection \S\ref{braid} for the definition of {\it interior}; see also \cite{b}).

\s

\item The group algebra decomposition of Jacobians  of Riemann surfaces which belong to the same family but lying in different  strata may differ radically. For instance, if $g \equiv 3 \mbox{ mod } 4$ and $S$ belongs to the stratum defined by $\theta_c$, then the group algebra decomposition of $JS$ has three factors  of dimension one, instead of only one as in the stratum \eqref{action1}.
\s
\item Note that in Proposition \ref{desco3dim}(a) the Jacobian varieties  $JS_{\langle s \rangle}$ and $JS_{\langle sr \rangle}$ are isomorphic. Thus, independently of the parity of $g,$ if $S \in \mathcal{F}_{g,1}$ then $JS$ is isogenous to $JS_{\langle r \rangle}  \times JS_{\langle s \rangle} \times JS_{\langle sr \rangle}.$ 

\s

\item Kani and Rosen in \cite{KR} provided conditions under which the Jacobian of a Riemann surface $S$ decomposes, up to isogeny, as a product of Jacobians of quotients of $S$. In spite of the fact that $JS \sim JS_{\langle r \rangle}  \times JS_{\langle s \rangle} \times JS_{\langle sr \rangle}$ for each $S$ as in Proposition \ref{desco3dim}, the previous isogeny cannot be derived from Kani-Rosen's result (the reason is that $\langle s \rangle$ and $\langle sr \rangle$ do not permute). It is worth mentioning that the  isogenies of Proposition \ref{desco3dim} (and the ones of Proposition \ref{celular} stated later) can be also obtained by applying the main result of \cite{kanirubiyo}.

\s

\item In \cite{yonil} it was proved that the maximal order of a nilpotent group of automorphisms of a  three-dimensional family of  Riemann surfaces of genus $g$ is $2(g-1).$ If $g-1$ is a power of 2 then  $\mathbf{D}_{g-1}$ is nilpotent, showing that  $\mathcal{F}_g$ attains the aforesaid upper bound for infinitely many values of $g.$
\end{enumerate}
\end{rema}

\section{The four-dimensional case} \label{dim4}

\begin{lemma} \label{L21} Let $g \geqslant 4$ be an even integer. If $g-1$ is a prime number then 
there are no complex four-dimensional families of compact Riemann surfaces of genus $g$ with strictly more than $g$ automorphisms.\end{lemma}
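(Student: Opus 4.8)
The plan is to mimic the argument of Lemma \ref{L11} but with the sharper arithmetic that comes from assuming $g$ is even and $q:=g-1$ is prime. Suppose, for contradiction, that there is a complex four-dimensional family of Riemann surfaces $S$ of genus $g$ carrying a group of automorphisms $G$ with $|G|>g$. Let $(h;m_1,\ldots,m_l)$ be the signature of the action. Since the family is four-dimensional we have $3h-3+l=4$, so $(h,l)\in\{(0,7),(1,4)\}$; there is no solution with $h\geqslant 2$ because then $l$ would be negative. First I would feed each of these two cases into the Riemann--Hurwitz formula
\[
2g-2=|G|\Bigl(2h-2+\sum_{j=1}^l\bigl(1-\tfrac1{m_j}\bigr)\Bigr)
\]
together with the bound $|G|\geqslant g+1$.

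Consider the case $h=1$, $l=4$. Then $2h-2=0$, so $2g-2=|G|\sum_{j=1}^4(1-1/m_j)\geqslant |G|\cdot 4\cdot\tfrac12=2|G|\geqslant 2(g+1)>2g-2$, a contradiction; so this case is impossible outright. The remaining, genuinely delicate case is $h=0$, $l=7$. Here $2g-2=|G|\bigl(5+\sum(1-1/m_j)\bigr)$... wait, $2h-2=-2$, so $2g-2=|G|\bigl(-2+\sum_{j=1}^7(1-1/m_j)\bigr)$, i.e. $|G|\bigl(5-\sum_{j=1}^7\tfrac1{m_j}\bigr)=2g-2$. Since each $m_j\geqslant 2$, the quantity $5-\sum 1/m_j$ lies in $[5-7/2,\,5)=[3/2,5)$, so $|G|\leqslant \tfrac23(2g-2)<2g-2$ automatically, but that alone does not contradict $|G|>g$. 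The key extra input is that $|G|>g$ forces $5-\sum 1/m_j$ to be small: precisely $5-\sum_{j=1}^7 1/m_j = (2g-2)/|G| < (2g-2)/g = 2-2/g < 2$, so $\sum_{j=1}^7 1/m_j>3$, which with seven terms each $\leqslant \tfrac12$ forces at least one $m_j=1$ — impossible — unless in fact $\sum 1/m_j$ can be $>3$ only with six $m_j=2$ and... six halves give $3$ exactly, so we would need the seventh reciprocal positive too, i.e. $\sum>3$ is impossible. Hence $h=0,l=7$ is also impossible, completing the contradiction and the proof. Let me restate this cleanly.

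Here is the structure I would write. After the reduction to $(h,l)\in\{(1,4),(0,7)\}$, dispatch $(1,4)$ by the inequality above. For $(0,7)$: from $|G|>g$ and Riemann--Hurwitz we get $-2+\sum_{j=1}^7(1-1/m_j)=(2g-2)/|G|<(2g-2)/g<2$, hence $\sum_{j=1}^7(1-1/m_j)<4$, i.e. $\sum_{j=1}^7 1/m_j>3$; but $m_j\geqslant 2$ gives $\sum 1/m_j\leqslant 7/2$... so $3<\sum 1/m_j\leqslant 7/2$ is not yet contradictory — the value $7/2$ is attained only when all $m_j=2$, which makes $\sum 1/m_j=7/2>3$, so this IS consistent and I have not derived a contradiction. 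This means the hypotheses that $g$ is even and $q$ is prime must actually be used, not just $|G|>g$. So the real plan is: having pinned the signature to $(0;2,2,2,2,2,2,2)$ or nearby small-index signatures compatible with $|G|>g$, one lists the finitely many signatures $(0;m_1,\ldots,m_7)$ with $|G|=(2g-2)/(5-\sum 1/m_j)>g$, observes $|G|$ must then equal $2q$ or a small multiple of $q$ (using $2g-2=2q$ and primality of $q$), and rules each out by a generation argument: a quotient of a genus-zero Fuchsian group of such a signature onto a group of order $2q$ with $q$ prime must be abelian-by-small or dihedral-type, and counting shows no surface-kernel epimorphism exists giving the right genus, or the resulting group has order $\leqslant g$. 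I expect the main obstacle to be exactly this last bookkeeping: enumerating the admissible 7-tuples $(m_1,\ldots,m_7)$ with $5-\sum 1/m_j$ dividing $2q$ and quotient $>g$, and for each showing no surface-kernel epimorphism onto a group of order $2q$ (equivalently, that $\ker\theta$ would not be torsion-free of the right genus), where the primality of $q$ and the parity of $g$ are what make the divisibility constraints rigid enough to finish.

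I should flag that the clean two-case dispatch only works if $|G|>g$ can be pushed to $|G|\geqslant 2q=2g-2$ via primality; indeed any group of order $>g=q+1$ acting with the given signature has order a divisor-related multiple forcing $q\mid |G|$ or a contradiction with Riemann--Hurwitz, so $|G|\in\{2q\}$ effectively, and then one is reduced to: does a Fuchsian group of signature $(0;2,\ldots,2)$ (seven entries) admit a surface-kernel epimorphism onto a group $G$ of order $2q$? Such $G$ is dihedral (since it is generated by involutions and $q$ is prime, it cannot be cyclic), and a dihedral group $\mathbf{D}_q$ acting with seven involutions as branch data has quotient genus $0$ and, by Riemann--Hurwitz, covers a surface of genus $q+2=g+1\ne g$ — the parity/primality forces the genus to miss $g$ by exactly the right amount. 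That genus mismatch is the contradiction. So the proof I would write is: reduce the signature, use primality to force $|G|=2q$ and dihedral, then compute that dihedral covers with seven branch points of index $2$ land on genus $g+1$, not $g$; hence no such four-dimensional family exists.
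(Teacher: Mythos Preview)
Your proposal has a genuine gap in the case $h=0$, $l=7$, and the final shortcut you settle on is incorrect.

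First a minor omission: the dimension equation $3h-3+l=4$ also admits $(h,l)=(2,1)$, which you overlook. This is harmless, since then $|G|=2(g-1)/(3-1/m_1)\leqslant 4(g-1)/5<g$, but it should be mentioned.

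The real problem is the last paragraph. You propose to ``use primality to force $|G|=2q$ and dihedral, then compute that dihedral covers with seven branch points of index~$2$ land on genus $g+1$.'' Both steps fail. With signature $(0;2,\stackrel{7}{\ldots},2)$ the Riemann--Hurwitz formula gives $|G|=\tfrac{4}{3}(g-1)$, not $2q=2(g-1)$; there is simply no action of a group of order $2q$ with that signature. And if $\mathbf D_q$ did act with signature $(0;2,\stackrel{7}{\ldots},2)$, the cover would have genus $(3q+2)/2$, which is not an integer for $q$ odd, let alone $g+1$. Your heuristic ``$q\mid|G|$, hence $|G|=2q$'' is never justified and, as the all-$2$'s case already shows, is false.

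More fundamentally, you never carry out the enumeration you yourself identify as the ``main obstacle.'' The inequality $\sum_{j=1}^{7}1/m_j>3+2/g$ does \emph{not} force all $m_j=2$: up to two of the $m_j$ can exceed~$2$. The paper's argument splits on the number $v\in\{0,1,2\}$ of periods larger than~$2$ and in each case reads off $|G|$ exactly from Riemann--Hurwitz, obtaining
\[
|G|\in\Bigl\{\tfrac{4}{3}(g-1),\ \tfrac{2a}{2a-1}(g-1),\ \tfrac{12}{11}(g-1),\ \tfrac{24}{23}(g-1),\ \tfrac{60}{59}(g-1)\Bigr\}.
\]
Primality of $g-1$ then forces the odd denominator to equal $g-1$ (otherwise $|G|$ is not an integer), and in every surviving instance one checks $|G|=g$, contradicting $|G|>g$. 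No generation or group-structure argument is needed; the contradiction is purely arithmetic. Your attempt to bypass this finite check does not go through.
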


\begin{proof}
Assume the existence of a complex four-dimensional family of  Riemann surfaces of genus $g$ with a group of automorphisms $G$ of order strictly greater than $g.$ If the signature of the action is $(h; m_1, \ldots, m_l),$ then the Riemann-Hurwitz formula ensures that $$2(g-1)>g(2h-2+l-\Sigma_{j=1}^l\tfrac{1}{m_j})$$and, after straightforward computations, one can see that necessarily $h=0$ and $l=7.$ Thus, \begin{equation} \label{aa}\Sigma_{j=1}^7 \tfrac{1}{m_j} > 3 + \tfrac{2}{g}\end{equation}If $v$ is the number of periods $m_j$ that are different from 2 then \eqref{aa} implies that  $v \in \{0,1,2\}.$  

\s

If $v=0$ then the signature of the action is $(0; 2, \stackrel{7}{\ldots},2)$ and the order of $G$ is $\tfrac{4}{3}(g-1).$ However, as $g-1$ is assumed to be prime, we obtain that $g=4,$ and this contradicts the assumption that the order of $G$ is strictly greater than the genus.

\s

If $v=1$ then the signature of the action is $(0; 2, \stackrel{6}{\ldots}, 2, a)$ for some $a \geqslant 3$ which satisfies, by \eqref{aa}, the inequality $2a < g.$ Note that the order of $G$ is $\tfrac{2a}{2a-1}(g-1),$ but, as $g-1$ is assumed to be prime, we see that necessarily $2a=g;$ a contradiction.

\s

Finally, if $v=2$  then the signature of the action is $(0; 2, \stackrel{5}{\ldots}, 2, a, b)$ for some $a,b \geqslant 3$ that, by \eqref{aa}, satisfy $\tfrac{1}{a}+\tfrac{1}{b} > \tfrac{1}{2}.$ It follows that  the signature of the action is either $$(0; 2, \stackrel{5}{\ldots}, 2, 3,3),\,\, (0; 2, \stackrel{5}{\ldots}, 2, 3, 4)\,\, \mbox{ or }\,\, (0; 2, \stackrel{5}{\ldots}, 2, 3, 5)$$ and, consequently, the order of $G$ is either $\tfrac{12}{11}(g-1),  \,\, \tfrac{24}{23}(g-1) \,\, \mbox{ or }\,\,\tfrac{60}{59}(g-1).$

Note that, as before, the assumption that $g-1$ is prime, implies that $g$ equals 12, 24 or 60 respectively. The contradiction is obtained after noticing that, in every case, the order of $G$ agrees with the genus.
\end{proof}

\begin{lemma} \label{L22} For each even integer $g \geqslant 4,$ there is a complex four-dimensional family of compact Riemann surfaces $S$ of genus $g$ with a group of automorphisms $G$ isomorphic to the dihedral group of order $g$ such that the signature of the action of $G$ on $S$ is $(0; 2, \stackrel{6}{\ldots}, 2, \frac{g}{2})$.
\end{lemma}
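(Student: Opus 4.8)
The plan is to mimic the structure of Lemma \ref{L12}: exhibit a surface-kernel epimorphism from a Fuchsian group of the prescribed signature onto $\mathbf{D}_{g/2}$ and then invoke Riemann's existence theorem, after first checking that the numerical (Riemann--Hurwitz) constraints are met. Write $n=g/2$, so that $n\geqslant 2$, and consider $\Delta$ with canonical presentation $\Delta=\langle x_1,\ldots,x_6,x_7 : x_1^2=\cdots=x_6^2=x_7^{n}=x_1\cdots x_7=1\rangle$ together with $\mathbf{D}_{n}=\langle r,s : r^{n}=s^2=(sr)^2=1\rangle$. First I would verify the Riemann--Hurwitz formula: the covering of degree $|\mathbf{D}_n|=2n=g$ should satisfy
\begin{equation*}
2(g-1)=g\Bigl(0-2+6\bigl(1-\tfrac12\bigr)+\bigl(1-\tfrac1n\bigr)\Bigr)=g\bigl(1-\tfrac1n\bigr)=g-2,
\end{equation*}
which indeed equals $2g-2$; so the arithmetic is consistent and a potential cover has genus $g$. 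The Teichm\"uller space of $\Delta$ has dimension $3\cdot 0-3+7=4$, giving the desired four-dimensional family.

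The core step is to produce an explicit surface-kernel epimorphism $\theta:\Delta\to\mathbf{D}_n$. A natural choice is
\begin{equation*}
\theta(x_1)=\theta(x_2)=\theta(x_3)=\theta(x_4)=s,\quad \theta(x_5)=\theta(x_6)=sr,\quad \theta(x_7)=r,
\end{equation*}
because then $\theta(x_1\cdots x_7)=s\cdot s\cdot s\cdot s\cdot sr\cdot sr\cdot r=(sr)(sr)\,r=r$ — which is not the identity, so this exact assignment needs a small correction. I would instead take $\theta(x_5)=\theta(x_6)=sr$ together with $\theta(x_7)=r^{-1}$: then $\theta(x_1)\cdots\theta(x_6)=s^4(sr)^2=r$ (using $s^2=1$ and $(sr)^2=1$ gives $s\cdot sr=r$, hence $(sr)(sr)=1$... let me instead use four copies of $s$, one copy of $sr$, one copy of $s$, and adjust $x_7$), so that the product relation forces $\theta(x_7)$ to be the appropriate power of $r$ of order exactly $n$; I would fix the precise exponents so that $\theta(x_7)$ has order $n$ and the long relation holds. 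Once the product relation is satisfied, surjectivity is immediate since $s$ and $sr$ (equivalently $s$ and $r$) generate $\mathbf{D}_n$, and $\theta$ is orientation-preserving by construction; the orders of the images match the prescribed branch indices ($2$ for $x_1,\ldots,x_6$ and $n=g/2$ for $x_7$), so $\theta$ has no torsion in its kernel beyond what the signature prescribes, i.e.\ $\ker\theta$ is a surface Fuchsian group. By Riemann's existence theorem (as recalled in Section \ref{s2}), $\ker\theta=\Gamma$ uniformizes a compact Riemann surface $S$ of genus $g$ on which $G\cong\mathbf{D}_n$ acts with signature $(0;2,\stackrel{6}{\ldots},2,\tfrac g2)$.

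The main obstacle — and it is a minor bookkeeping one rather than a conceptual one — is getting the generating vector exactly right so that the product of all seven images is the identity while the seventh image still has order precisely $n=g/2$; the four preceding lemmas in the paper show this kind of adjustment always works by choosing the exponent of $r$ attached to $x_7$ to cancel the contribution of the involutions. One must also note the edge case $g=4$, where $n=2$ and $\mathbf{D}_2\cong C_2\times C_2$; here the seventh period equals $2$ as well, and the assignment degenerates to a vector of seven involutions in $C_2\times C_2$ summing to zero, which is easily arranged (e.g.\ $s,s,sr,sr,sr,sr$ together with... seven entries summing to the identity in $(\mathbb Z/2)^2$), so the statement continues to hold. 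With the epimorphism in hand, the existence of the closed four-dimensional family $\mathcal F$ of such surfaces follows, completing the proof.
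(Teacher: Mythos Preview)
Your approach is exactly the paper's, but the execution has two concrete gaps.

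First, the Riemann--Hurwitz check contains an arithmetic slip: with $n=g/2$,
\[
0-2+6\bigl(1-\tfrac12\bigr)+\bigl(1-\tfrac1n\bigr)=-2+3+1-\tfrac1n=2-\tfrac1n,
\]
not $1-\tfrac1n$, so the right-hand side is $g(2-\tfrac1n)=2g-g/n=2g-2$, not $g-2$. Your final claim ``$g-2$ equals $2g-2$'' is false; you simply lost a $+1$ in the bracket.

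Second, and more seriously, you never produce a valid generating vector. Your first attempt $(s,s,s,s,sr,sr,r)$ has product $r$; your proposed fix $(s,s,s,s,sr,sr,r^{-1})$ has product $r^{-1}\neq 1$ as well (since $s^4(sr)^2=1$, not $r$); and the subsequent ``let me instead use\ldots'' trails off without a concrete assignment. Saying ``I would fix the precise exponents'' is not a proof when the entire content of the lemma is the existence of such a vector. A correct and simple choice---the one the paper uses---is
\[
x_1,\ldots,x_5\mapsto s,\qquad x_6\mapsto sr^{-1},\qquad x_7\mapsto r,
\]
for which $s^5\cdot sr^{-1}\cdot r=s\cdot sr^{-1}\cdot r=r^{-1}r=1$, the images of $x_1,\ldots,x_6$ are involutions, $\theta(x_7)=r$ has order exactly $n=g/2$, and $\langle s,r\rangle=\mathbf{D}_n$. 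This single line replaces all of your tentative attempts; once it is written down, the rest of your argument (torsion-free kernel, Riemann's existence theorem, dimension $3\cdot 0-3+7=4$) is fine, and the separate discussion of $g=4$ is unnecessary since the same vector works there.
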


\begin{proof} Let $\Delta$ be a Fuchsian group of signature $(0; 2, \stackrel{6}{\ldots}, 2, \frac{g}{2})$ with canonical presentation $$\Delta=\langle x_1, \ldots, x_7 : x_1^2 = \cdots =x_6^2= x_7^{\frac{g}{2}}=x_1\cdots x_7=1\rangle,$$and consider the dihedral group $\mathbf{D}_{\frac{g}{2}}=\langle r, s : r^{\frac{g}{2}}=s^2=(sr)^2=1\rangle$ of order $g.$ Note that  \begin{equation*} \label{action2}\Delta \to \mathbf{D}_{\frac{g}{2}} \, \mbox{ given by } \, x_1, \ldots, x_5 \mapsto s, x_6 \mapsto sr^{-1} \, \mbox{ and } \, x_7 \mapsto r\end{equation*}is a surface-kernel epimorphism of signature $(0; 2, \stackrel{6}{\ldots}, 2, \frac{g}{2})$. In addition, the equality $$2(g-1)=g[0 - 2 + 6(1-\tfrac{1}{2} ) + (1- \tfrac{2}{g})]$$shows that the Riemann-Hurwitz formula is satisfied for a $g$-fold regular covering map from a Riemann surface of genus $g$ onto the projective line with six branch values marked with 2 and with one branch value marked with $\tfrac{g}{2}$. The existence of the  family follows from Riemann's existence theorem.
\end{proof}

{\bf Notation.} From now on, we shall denote the family of all those  surfaces $S$ of genus $g \geqslant 4$ with a group of automorphisms $G$ isomorphic to the dihedral group of order $g$ such that the signature of the action of $G$ on $S$ is $(0; 2, \stackrel{6}{\ldots}, 2, \frac{g}{2})$ by $\mathscr{V}_g.$

\begin{lemma} \label{L31} Let $g \geqslant 3$ be an odd integer. If $g-1$ is a power of two then 
there are no complex four-dimensional families of  Riemann surfaces of genus $g$ with strictly more than $g-1$ automorphisms.
\end{lemma}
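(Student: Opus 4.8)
The plan is to follow closely the proof of Lemma \ref{L21}, the only structural change being that the hypothesis ``$g-1$ is prime'' is replaced by the elementary observation that a power of two has no odd divisor greater than $1$. Suppose, towards a contradiction, that for our odd $g$ with $g-1=2^{k}$ there is a complex four-dimensional family of compact Riemann surfaces of genus $g$ carrying a group $G$ of automorphisms with $|G|>g-1$, and let $(h;m_1,\ldots,m_l)$ be the signature of the action. Since the family is four-dimensional we have $3h-3+l=4$, and therefore $(h,l)\in\{(0,7),(1,4),(2,1)\}$.

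Put $R:=2h-2+\sum_{j=1}^{l}(1-\tfrac{1}{m_j})$, so that the Riemann--Hurwitz formula reads $2(g-1)=|G|\,R$; the hypothesis $|G|>g-1$ then forces $R<2$. Because each factor $1-\tfrac{1}{m_j}$ is at least $\tfrac{1}{2}$, one checks at once that $R\geqslant 2$ when $(h,l)=(1,4)$ and $R\geqslant\tfrac{5}{2}$ when $(h,l)=(2,1)$; hence the signature must be $(0;m_1,\ldots,m_7)$, and the condition $R<2$ is equivalent to $\sum_{j=1}^{7}\tfrac{1}{m_j}>3$.

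Let $v$ be the number of periods $m_j$ different from $2$. Combining $\sum_{j=1}^{7}\tfrac{1}{m_j}\leqslant\tfrac{7-v}{2}+\tfrac{v}{3}$ with $\sum_{j=1}^{7}\tfrac{1}{m_j}>3$ gives $v<3$, so $v\in\{0,1,2\}$ and the signature is one of
$$(0;2,\stackrel{7}{\ldots},2),\qquad (0;2,\stackrel{6}{\ldots},2,a)\ \ (a\geqslant 3),\qquad (0;2,\stackrel{5}{\ldots},2,a,b)\ \ (3\leqslant a\leqslant b,\ \tfrac{1}{a}+\tfrac{1}{b}>\tfrac{1}{2}),$$
and in the last case necessarily $(a,b)\in\{(3,3),(3,4),(3,5)\}$. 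For each of these finitely many signatures I would compute $R$ and hence $|G|=2(g-1)/R$, obtaining respectively $|G|=\tfrac{4}{3}(g-1)$; $|G|=\tfrac{2a}{2a-1}(g-1)$; and $|G|=\tfrac{12}{11}(g-1),\ \tfrac{24}{23}(g-1),\ \tfrac{60}{59}(g-1)$.

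In every one of these cases $|G|=\tfrac{m}{n}(g-1)$ with $\gcd(m,n)=1$ and $n$ an \emph{odd} integer greater than $1$, namely $n\in\{3,\ 2a-1,\ 11,\ 23,\ 59\}$ (and $2a-1\geqslant 5$ is odd since $a\geqslant 3$). As $|G|$ is an integer this forces $n\mid g-1=2^{k}$, which is impossible because a power of two has no odd divisor exceeding $1$. This contradiction rules out every case and proves the lemma. There is no genuinely difficult step here; the only place that requires care --- and the part I would write out in full --- is the combinatorial reduction of the a priori infinitely many signatures to the short list above, together with the verification that every resulting denominator $n$ is odd and larger than $1$, the remaining manipulations being routine Riemann--Hurwitz computations.
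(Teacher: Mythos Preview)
Your proof is correct and follows essentially the same approach as the paper's: reduce to signature $(0;m_1,\ldots,m_7)$, list the finitely many admissible signatures via the count $v\in\{0,1,2\}$, compute $|G|$ in each case, and observe that the resulting denominators $3,\ 2a-1,\ 11,\ 23,\ 59$ are all odd integers greater than~$1$ and hence cannot divide $g-1=2^{k}$. Your write-up is in fact slightly more explicit than the paper's in ruling out $(h,l)=(1,4)$ and $(2,1)$ and in isolating the coprimality $\gcd(m,n)=1$, but the argument is the same.
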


\begin{proof} Assume the existence of a complex four-dimensional family of  Riemann surfaces of genus $g$ with a group of automorphisms $G$ of order strictly greater than $g-1,$ and denote the signature of the action by $(h; m_1, \ldots, m_l).$ Then the Riemann-Hurwitz formula ensures that $$4>2h+l-\Sigma_{j=1}^l\tfrac{1}{m_j};$$showing that $h=0$ and $l=7,$ and consequently $\label{ee1} 3 < \Sigma_{j=1}^7 \tfrac{1}{m_j} \leqslant \tfrac{7}{2}.$ By proceeding analogously as done in the proof of Lemma \ref{L21} one sees that the signature is either $$(0; 2, \stackrel{7}{\ldots}, 2), \,(0; 2, \stackrel{6}{\ldots}, 2, a), \,(0; 2, \stackrel{5}{\ldots}, 2, 3,3), \,(0; 2, \stackrel{5}{\ldots}, 2, 3,4) \mbox{ or } (0; 2, \stackrel{5}{\ldots}, 2, 3,5)$$for some $a \geqslant  3.$ It follows that the order of $G$ is either $ \tfrac{4}{3}(g-1), \, \tfrac{2a}{2a-1}(g-1), \, \tfrac{12}{11}(g-1), \, \tfrac{24}{23}(g-1)$ or $\tfrac{60}{59}(g-1).$ The contradiction is obtained after noticing that if $g-1$ is a power of $2,$ then the aforementioned fractions are not integers. 
\end{proof}

\begin{lemma}\label{L32} Let $g \geqslant 3$ be an odd integer. There are:
\begin{enumerate}
\item[(a)] a complex four-dimensional family  of compact Riemann surfaces $S$ of genus $g$ with a group of automorphisms $G$ isomorphic to the cyclic group of order $g-1$ such that the signature of the action of $G$ on $S$ is $(1; 2, \stackrel{4}{\ldots}, 2)$, and 
\item[(b)] a complex four-dimensional family of compact Riemann surfaces $S$ of genus $g$ with a group of automorphisms $G$ isomorphic to the dihedral group of order $g-1$ such that the signature of the action of $G$ on $S$ is $(1; 2, \stackrel{4}{\ldots}, 2)$.
\end{enumerate}
\end{lemma}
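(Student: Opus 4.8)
The plan is to verify, for each of the two claimed families, that the Riemann--Hurwitz formula admits a solution with the stated signature and group order, and then to exhibit an explicit surface-kernel epimorphism; the existence of the corresponding four-dimensional family is then immediate from Riemann's existence theorem, exactly as in the proofs of Lemmata \ref{L12} and \ref{L22}. Since the signature in both cases is $(1; 2,\stackrel{4}{\ldots},2)$, the associated Teichm\"uller space has dimension $3h-3+l = 3\cdot 1 - 3 + 4 = 4$, which accounts for the ``four-dimensional'' claim; and the Riemann--Hurwitz identity to check is
\begin{equation*}
2(g-1) = (g-1)\Bigl[2\cdot 1 - 2 + 4\bigl(1-\tfrac{1}{2}\bigr)\Bigr] = (g-1)\cdot 2,
\end{equation*}
which holds for a $(g-1)$-fold cover in both cases since the group has order $g-1$ in part (a) and $2\cdot\frac{g-1}{2} = g-1$ in part (b) (here $\frac{g-1}{2}$ is an integer because $g$ is odd, so $\mathbf{D}_{(g-1)/2}$ makes sense and has the right order). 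So the only real content is the construction of the epimorphisms.

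For part (a), I would take $\Delta$ with canonical presentation $\langle \alpha_1, \beta_1, x_1, x_2, x_3, x_4 : x_1^2 = x_2^2 = x_3^2 = x_4^2 = [\alpha_1,\beta_1]x_1x_2x_3x_4 = 1\rangle$ and the cyclic group $C_{g-1} = \langle t : t^{g-1}=1\rangle$. When $g-1$ is even (in particular always, since $g$ is odd so $g-1$ is even) the element $t^{(g-1)/2}$ is the unique involution of $C_{g-1}$, and I would set $\theta(x_1)=\theta(x_2)=\theta(x_3)=\theta(x_4)=t^{(g-1)/2}$, $\theta(\alpha_1)=t$, $\theta(\beta_1)=1$. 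Then the long relation maps to $[t,1]\cdot (t^{(g-1)/2})^4 = 1$ since $(g-1)\mid 2(g-1)$, the orders of the $x_i$-images are correct, and surjectivity holds because $t = \theta(\alpha_1)$ is a generator. Hence $\theta$ is a surface-kernel epimorphism of the required signature.

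For part (b), I would take the same $\Delta$ and the dihedral group $\mathbf{D}_{(g-1)/2} = \langle r, s : r^{(g-1)/2} = s^2 = (sr)^2 = 1\rangle$ of order $g-1$. A natural choice is $\theta(x_1)=\theta(x_2)=\theta(x_3)=\theta(x_4)=s$, $\theta(\alpha_1)=r$, $\theta(\beta_1)=s$ (or $\theta(\beta_1)=1$, adjusting as needed): the $x_i$-images are involutions, the commutator $[\,\theta(\alpha_1),\theta(\beta_1)\,]$ times $s^4=1$ is trivial, and $\langle r, s\rangle = \mathbf{D}_{(g-1)/2}$ gives surjectivity. One must double-check that the product relation is satisfied; if $\theta(\beta_1)=s$ then $[r,s] = rsr^{-1}s^{-1} = rsr^{-1}s = r\cdot r = r^2$ is not generally trivial, so I would instead use $\theta(\beta_1)=1$, making $[\,\theta(\alpha_1),\theta(\beta_1)\,]=1$ and the long relation $s^4=1$. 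The main obstacle, such as it is, is simply this bookkeeping: choosing the images of $\alpha_1, \beta_1$ so that the commutator contributes trivially (or cancels) in the long relation while still generating the group together with the $x_i$-images --- a routine check with no conceptual difficulty, and the $h=1$ braid transformations recorded in \eqref{bd} and the $A_{i,n}, C_{i,j}$ above are available if one later wants to normalize these vectors. Once the epimorphisms are in place, Riemann's existence theorem delivers the families and the proof is complete.
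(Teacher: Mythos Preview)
Your approach is essentially identical to the paper's: verify the Riemann--Hurwitz equality $2(g-1)=(g-1)[2-2+4(1-\tfrac{1}{2})]$, exhibit explicit surface-kernel epimorphisms, and invoke Riemann's existence theorem. Your epimorphism for (a) coincides with the paper's, while for (b) the paper uses $(\alpha_1,\beta_1;x_1,x_2,x_3,x_4)\mapsto(1,1;s,s,sr,sr)$ rather than your (equally valid) $(r,1;s,s,s,s)$; either choice works, so once you commit to a single clean generating vector the argument is complete.
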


\begin{proof} Let $\Delta$ be a Fuchsian group of signature $(1; 2, \stackrel{4}{\ldots}, 2)$ with canonical presentation $$\Delta=\langle \alpha_1, \beta_1, x_1, x_2, x_3, x_4 : x_1^2 =x_2^2= x_3^2=x_4^2=\alpha_1 \beta_1 \alpha_1^{-1}\beta_1^{-1}x_1x_2x_3x_4=1\rangle,$$and consider the cyclic group $C_{g-1}=\langle t : t^{g-1}=1\rangle$ and the dihedral group $\mathbf{D}_{\frac{g-1}{2}}=\langle r,s : r^{\frac{g-1}{2}}=s^2=(sr)^2=1 \rangle.$  The homomorphisms \begin{equation} \label{aa1}\Delta \to C_{g-1} \, \mbox{ given by } \, \alpha_1 \mapsto t, \, \beta_1 \mapsto 1, \, x_1, x_2, x_3, x_4 \mapsto t^{\frac{g-1}{2}}\end{equation}\begin{equation*} \label{aa2}\Delta \to \mathbf{D}_{\frac{g-1}{2}} \, \mbox{ given by } \, \alpha_1, \beta_1 \mapsto 1, \, x_1, x_2 \mapsto s, \,  x_3, x_4 \mapsto sr\end{equation*}are surface-kernel epimorphisms of signature $(1; 2, \stackrel{4}{\ldots}, 2)$. In addition, the equality $$2(g-1)=(g-1)[2 - 2 + 4(1-\tfrac{1}{2} )]$$shows that the Riemann-Hurwitz formula is satisfied for a $(g-1$)-fold regular covering map from a Riemann surface of genus $g$ onto a Riemann surface of genus 1 with four branch values marked with 2. 

Thus, the existence of the desired families follows from Riemann's existence theorem.
\end{proof}

{\bf Notation.} From now on, we shall denote the family  of all those  surfaces $S$ of genus $g \geqslant 3$ with a group of automorphisms $G$ isomorphic to the cyclic group of order $g-1$ (to the dihedral group of order $g-1$) such that the signature of the action of $G$ on $S$ is $(1; 2, \stackrel{4}{\ldots}, 2)$ by $\mathscr{U}_g^1$ (by $\mathscr{U}_g^2$).

\begin{theo} \label{t4}
If $B=\{g \in \mathbb{N}: g \geqslant 3 \}$ then $N_4(g,B)$ does not exist.
\end{theo}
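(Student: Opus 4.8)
The plan is to show that $N_4(g,B)$ cannot exist by exhibiting two disjoint infinite subsets of $B=\{g\geqslant 3\}$ on which the ``maximal four-dimensional order of the form $ag+b$'' is forced to be two genuinely different linear functions of $g$; since a single integer of the form $ag+b$ cannot simultaneously equal two distinct such functions on arbitrarily large $g$, no common value $N_4(g,B)$ can be attained for all $g\in B$. Concretely, I would take $A_1=\{g\geqslant 3: g \text{ odd}\}$ and $A_2=\{g\geqslant 4: g \text{ even}\}$, and prove $N_4(g,A_1)=g-1$ and $N_4(g,A_2)=g$; the theorem then follows formally.

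For the even case $A_2$: Lemma \ref{L22} already supplies, for every even $g\geqslant 4$, a complex four-dimensional family $\mathscr{V}_g$ with automorphism group $\mathbf{D}_{g/2}$ of order $g$, so condition (1) of the definition of $N_d$ holds with value $g$. For condition (2) I would invoke Lemma \ref{L21}: restricting attention to those (infinitely many) even $g$ for which $g-1$ is prime, there is no complex four-dimensional family of genus $g$ with strictly more than $g$ automorphisms. Hence $g$ works as $N_4(g,A_2)$, and by uniqueness of the linear form (as in the definition) $N_4(g,A_2)=g$. For the odd case $A_1$: Lemma \ref{L32} gives, for every odd $g\geqslant 3$, a complex four-dimensional family ($\mathscr{U}_g^1$ or $\mathscr{U}_g^2$) with a group of order $g-1$ acting with signature $(1;2,\stackrel{4}{\ldots},2)$, giving condition (1) with value $g-1$; and Lemma \ref{L31}, applied along the infinitely many odd $g$ with $g-1$ a power of two, gives condition (2). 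Thus $N_4(g,A_1)=g-1$.

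Finally I would combine these: suppose $N_4(g,B)$ existed, equal to some integer-valued linear form $ag+b$. Since $A_1\subseteq B$, for every odd $g\geqslant 3$ there must be a four-dimensional family with $ag+b$ automorphisms, so $ag+b\geqslant g-1$ for all large odd $g$, forcing $a\geqslant 1$ (and if $a=1$ then $b\geqslant -1$); similarly using $A_2\subseteq B$ and Lemma \ref{L21} along even $g$ with $g-1$ prime, condition (2) of the definition fails unless $ag+b\leqslant g$ for those $g$, forcing $a\leqslant 1$, hence $a=1$ and $-1\leqslant b\leqslant 0$. But then along the odd $g$ with $g-1$ a power of two, $ag+b=g+b\geqslant g-1>g-1$ is false only if $b=-1$, while along even $g$ with $g-1$ prime we would need $g+b\leqslant g$, consistent only with $b\leqslant 0$, yet the family of Lemma \ref{L22} already has exactly $g=g+0$ automorphisms so $b=-1$ violates condition (1) there (no four-dimensional family of even genus has been produced with only $g-1$ automorphisms when $g-1$ is prime, since then any such action would have to have one of the signatures enumerated in Lemma \ref{L21}, none of order $g-1$). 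This contradiction shows no such $ag+b$ exists.

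The main obstacle is the last bookkeeping step: one must argue carefully that the two constraints ``$\geqslant g-1$ infinitely often from $A_1$'' and ``not $>g$ infinitely often from $A_2$'' (together with the complementary lemmas pinning the value exactly on each subfamily) are logically incompatible for a single linear form $ag+b$ valid on all of $B$. I expect the cleanest route is simply to observe that the definition of $N_d(g,B)$ requires, in particular, property (1) for \emph{every} $g\in B$; so $N_4(g,B)$, if it exists, is $\geqslant N_4(g,A_1)=g-1$ and, by property (2) tested on a $g\in A_2$ with $g-1$ prime where nothing beats $g$, we also get $N_4(g,B)\leqslant g$ — but property (1) tested on that same even $g$ then demands a four-dimensional family with $N_4(g,B)$ automorphisms, $g-1\leqslant N_4(g,B)\leqslant g$, which by Lemma \ref{L21} must have order exactly $g$ (the only admissible value in that range being realized by a four-dimensional family for those $g$); so $N_4(g,B)=g$ as a linear form, contradicting $N_4(g,B)\geqslant g-1$ being forced to \emph{equal} $g-1$ on the odd values $g$ with $g-1$ a power of two, where Lemma \ref{L31} caps the order at $g-1<g$. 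Making this chain of evaluations airtight — especially checking that no four-dimensional family of the relevant even genera has order strictly between $g-1$ and $g$, and none of odd genera with $g-1$ a $2$-power reaches $g$ — is the substance of the argument, and all the needed ingredients are precisely Lemmata \ref{L21}, \ref{L22}, \ref{L31} and \ref{L32}.
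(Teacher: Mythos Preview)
Your outline has the right shape --- use Lemmata \ref{L21}--\ref{L32} to pin the putative linear form down and then derive a contradiction --- but the endgame contains a genuine gap, and several inequalities are stated backwards. From condition (1) together with Lemma \ref{L31} you obtain $ag+b\leqslant g-1$ (not $\geqslant$) along the odd $g$ with $g-1$ a power of two, which forces $a\leqslant 1$ and, when $a=1$, $b\leqslant -1$; ruling out $a\leqslant 0$ is routine. If you then invoke condition (2) at its witness $g_0$ together with Lemma \ref{L22} or \ref{L32}, you are pushed to $b=-1$ (an even $g_0$ already gives $b\geqslant 0$, a contradiction). At this point you must violate condition (1), and the assertion you need is: for even $g$ with $g-1$ prime, there is \emph{no} four-dimensional family whose automorphism group has order exactly $g-1$. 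This is \emph{not} Lemma \ref{L21}, which only excludes orders strictly greater than $g$, and it is not contained in any of the four lemmas; so your closing claim that they alone suffice is incorrect. The missing step is a short but separate Riemann--Hurwitz check: such a group must be $C_{g-1}$, and the three admissible pairs $(h,l)\in\{(0,7),(1,4),(2,1)\}$ with every period equal to $g-1$ yield no even $g\geqslant 4$.

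By contrast, the paper never pins $b$ down via condition (2). After reaching $a=1$, $b\leqslant -1$ it works directly with condition (1): for each of the infinitely many $g$ with $g+b$ prime the group is necessarily cyclic of order $g+b$ with every period equal to $g+b$, and the same three-case Riemann--Hurwitz computation shows that each signature type determines $g$ in terms of $b$, so only finitely many $g$ can occur --- contradicting condition (1). Your missing step is exactly this computation specialised to $b=-1$; hence your route is repairable, but not with the four lemmas alone, and the claim that the result ``follows formally'' from $N_4(g,A_1)\neq N_4(g,A_2)$ is too optimistic.
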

\begin{proof}
We shall proceed by contradiction. Let us assume that $N_{4}(g,B)$ exists and that $$N_{4}(g,B)=ag + b \,\, \mbox{ for suitable (and fixed) } \,\, a,b \in \mathbb{Z}.$$

We claim that $a=1.$ Indeed:
\begin{enumerate}
\item Clearly $a$ cannot be zero (consider Lemma \ref{L22} with $g=b+1$).
\item If $a$ were negative (and therefore $b$ must be positive) then for each $g \geqslant -\tfrac{b}{a}+1$ the number $N_{4}(g,B)$ would be negative; a contradiction. 
\item If $a$ were strictly greater than 1 then for \begin{displaymath} g >  \left\{ \begin{array}{ll}
 \,\,\,2 & \textrm{if $b \geqslant 0$}\\
\tfrac{-b}{a-1} & \textrm{if $b < 0$}
  \end{array} \right.
\end{displaymath} the number $N_{4}(g,B)$ would exceed $g;$ this fact contradicts Lemmata \ref{L21} and \ref{L31}.
\end{enumerate}

Furthermore, by Lemma \ref{L31}, we see that necessarily $b \leqslant -1.$

\s

It follows that for every $g \geqslant 2,$ there is a complex four-dimensional family of  Riemann surfaces $S$ of genus $g$ with a group of automorphisms $G$ of order $g+b.$  If the signature of the action of $G$ on $S$ is $(h; m_1, \ldots, m_l)$ then each period $m_j$ must equal $g+b,$ since otherwise $g \equiv -b \mbox{ mod } m_j$  for some $m_j,$ contradicting the fact that the family exists for all $g\geqslant 2.$ In particular, we obtain that $G$ is necessarily isomorphic to the cyclic group and the Riemann-Hurwitz formula implies that $$2(g-1)=(g+b)[2h-2+l(1-\tfrac{1}{g+b})].$$Hence $b=1-\tfrac{3}{5}g$ if $h=0,$ $b=\tfrac{1}{2}(1-g)$ if $h=1$ and $b=\tfrac{-1}{3}(g+1)$ if $h=2,$ showing that the existence of the family fails to be true for all genus.
\end{proof}
%
%

Once the non-existence of $N_{4}(g,B)$ has been proved, it makes sense to state the following theorem.

\begin{theo} Let $A_1=\{ g \in \mathbb{N}: g \geqslant 3 \mbox{ is odd}\}$ and $A_2=\{ g \in \mathbb{N}: g \geqslant 4 \mbox{ is even}\}.$ Then
$$N_4(g, A_1)=g-1 \,\,\, \mbox{ and } \,\,\,\, N_4(g, A_2)=g.$$
\end{theo}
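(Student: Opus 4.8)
The plan is to assemble the statement from the lemmas already proved, treating the two equalities separately. For the odd case $N_4(g,A_1)=g-1$, one half is immediate: Lemma \ref{L32} exhibits, for \emph{every} odd $g\geqslant 3$, a complex four-dimensional family (in fact two of them, $\mathscr{U}_g^1$ and $\mathscr{U}_g^2$) of Riemann surfaces of genus $g$ carrying a group of automorphisms of order $g-1$ with signature $(1;2,\stackrel{4}{\ldots},2)$; this establishes condition (1) of the definition of $N_d(g,A)$ with $N=g-1$. For condition (2) it suffices to produce \emph{one} value of $g\in A_1$ for which no four-dimensional family of genus $g$ has strictly more than $g-1$ automorphisms, and Lemma \ref{L31} does exactly that: whenever $g-1$ is a power of two (e.g.\ $g=3,5,9,17,\ldots$, infinitely many odd values), no such family with more than $g-1$ automorphisms exists. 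The only remaining point is the uniqueness (well-definedness) clause in the definition: one must check that $g-1$ really is the \emph{unique} integer of the form $ag+b$ satisfying both conditions. This follows because any two distinct linear expressions $ag+b$ and $a'g+b'$ that agree on infinitely many integers must coincide as polynomials; so if a competing value $N_4(g,A_1)=a'g+b'\neq g-1$ existed, it would have to differ from $g-1$ for all but finitely many $g\in A_1$, and then on those $g$ with $g-1$ a power of two (of which there are infinitely many) it would need to be $\leqslant g-1$ by Lemma \ref{L31} while being attained by a family, forcing $a'g+b'\leqslant g-1$; a symmetric comparison, together with the fact that $g-1$ is itself attained everywhere, pins down $a'g+b'=g-1$. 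I would write this comparison argument out carefully since it is the one genuinely new piece of reasoning.

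For the even case $N_4(g,A_2)=g$ the structure is identical. Lemma \ref{L22} provides, for every even $g\geqslant 4$, the four-dimensional family $\mathscr{V}_g$ with automorphism group $\mathbf{D}_{g/2}$ of order $g$ and signature $(0;2,\stackrel{6}{\ldots},2,\tfrac{g}{2})$, giving condition (1) with $N=g$. For condition (2), Lemma \ref{L21} shows that whenever $g-1$ is prime (infinitely many even $g$, by Dirichlet or simply because there are infinitely many primes, each of the form $g-1$ with $g$ even), no four-dimensional family of genus $g$ has strictly more than $g$ automorphisms. Uniqueness of the value $g$ among expressions $ag+b$ is handled by the same two-expressions-agreeing-infinitely-often argument as above, now using Lemma \ref{L21} on the cofinite set of even $g$ with $g-1$ prime to bound any competitor from above by $g$, and the family $\mathscr{V}_g$ to show $g$ is always attained.

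I would organize the write-up as: first recall what must be verified (conditions (1), (2), and well-definedness); then dispatch (1) by citing Lemmata \ref{L32} and \ref{L22}; then dispatch (2) by citing Lemmata \ref{L31} and \ref{L21} together with the existence of infinitely many odd $g$ with $g-1$ a power of two (respectively infinitely many even $g$ with $g-1$ prime); and finally give the short argument that $g-1$ (resp.\ $g$) is the unique linear expression with these properties. The main obstacle I anticipate is not any computation but being scrupulous about the definition of $N_d(g,A)$: condition (2) only requires failure for \emph{at least one} $g$, so one must be careful that a hypothetical larger value $N_4(g,A_1)=ag+b$ with $a>1$ is excluded not by a single $g$ but by the growth comparison against Lemma \ref{L31}'s bound on the infinite subset where $g-1$ is a power of two — this is precisely the shape of argument already used in the proof of Theorem \ref{t4}, so it can be modeled on that. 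A secondary subtlety is confirming that the families of Lemma \ref{L32} and Lemma \ref{L22} are genuinely four-dimensional: this is exactly the dimension $3h-3+l$ of the associated Teichmüller space, namely $3\cdot 1-3+4=4$ in the genus-one-quotient case and $3\cdot 0-3+7=4$ in the genus-zero case with seven branch points, so no extra work is needed beyond a one-line remark.
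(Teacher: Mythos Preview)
Your proposal is correct and follows the same route as the paper: the paper's proof is a single sentence citing Lemmata \ref{L21}, \ref{L22}, \ref{L31} and \ref{L32}, exactly the four ingredients you invoke for conditions (1) and (2). Your additional discussion of the uniqueness clause (using the infinitude of $g\in A_1$ with $g-1$ a power of two, respectively of $g\in A_2$ with $g-1$ prime, to force any competing linear expression to coincide with $g-1$, respectively $g$) is more scrupulous than the paper, which leaves this point implicit; the argument you sketch is sound and, as you note, parallels the growth comparison in the proof of Theorem \ref{t4}.
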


\begin{proof}
The proof follows directly from Lemmata \ref{L21}, \ref{L22}, \ref{L31} and \ref{L32}.
\end{proof}

\begin{rema}
It is worth observing that the phrase {\it for at least one} $g \in A_j$ in the second statement of the definition of $N_4(g,A_j)$ is not vacuous. Indeed, it is not a difficult task to verify the following facts.
\s

\begin{enumerate}
\item For each $g \geqslant 7$ such that $g \equiv 3 \mbox{ mod } 4$ there exists a complex four-dimensional family of  Riemann surfaces of genus $g$ with a group of automorphisms isomorphic to the dihedral group of order  $g+1$ such that the signature of the action is $(0; 2, \stackrel{6}{\ldots},2, \tfrac{g+1}{4})$.

\s

\item For each $g \geqslant 4$ such that $g \equiv 4 \mbox{ mod } 6$ there exists a complex four-dimensional family of  Riemann surfaces of genus $g$ with a group of automorphisms isomorphic to the dihedral group of order  $\tfrac{4}{3}(g-1)$ such that the signature of the action is $(0; 2, \stackrel{7}{\ldots},2)$.
\end{enumerate}
\end{rema}

\section{The family $\mathscr{V}_g$} \label{s6}

\begin{prop} \label{L23} Let $g \geqslant 4$ be an even integer. 
If $\tfrac{g}{2}$ is a prime number then $\mathscr{V}_g$ is the unique complex four-dimensional family of compact Riemann surfaces of genus $g$ with $g$ automorphisms.
\end{prop}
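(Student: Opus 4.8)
The plan is to follow the pattern of Lemma~\ref{L21}: writing $p=g/2$ (a prime), I will show that any complex four-dimensional family of compact Riemann surfaces of genus $g$ endowed with a group $G$ of automorphisms of order $g$ is forced to have $G\cong\mathbf{D}_{g/2}$ acting with signature $(0;2,\stackrel{6}{\ldots},2,\tfrac{g}{2})$. Since, by the very definition of a family, this pins the family down to $\mathscr{V}_g$, and since $\mathscr{V}_g$ is nonempty and four-dimensional by Lemma~\ref{L22}, uniqueness follows immediately.

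First I would determine the signature $(h;m_1,\ldots,m_l)$ of the action of $G$. Four-dimensionality means $3h-3+l=4$, so $(h,l)\in\{(0,7),(1,4),(2,1)\}$, while the Riemann--Hurwitz formula with $|G|=g=2p$ and $2g-2=4p-2$ reads $2-\tfrac1p=2h-2+\sum_j(1-\tfrac1{m_j})$. The cases $h=1$ and $h=2$ are ruled out at once, because they force $\sum_{j=1}^4\tfrac1{m_j}=2+\tfrac1p>2$ and $\tfrac1{m_1}=1+\tfrac1p>1$, respectively. Hence $h=0$, $l=7$ and $\sum_{j=1}^7\tfrac1{m_j}=3+\tfrac1p$. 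Now Lagrange's theorem forces $m_j\mid|G|=2p$, so $m_j\in\{2,p,2p\}$ when $p$ is odd and $m_j\in\{2,4\}$ when $p=2$. Let $k$ be the number of periods equal to $2$; bounding each of the other $7-k$ periods by $\tfrac1p$ gives $3+\tfrac1p\le\tfrac k2+\tfrac{7-k}{p}$, which simplifies to $(6-k)(p-2)\le0$ and hence (as $p\ge3$) to $k\ge6$. The value $k=7$ would give $\tfrac72=3+\tfrac1p$, impossible for $p$ odd; so $k=6$, and the remaining period is $p$. A parallel count for $p=2$ (i.e.\ $g=4$) yields $k=7$, so the signature is $(0;2,\stackrel{7}{\ldots},2)=(0;2,\stackrel{6}{\ldots},2,\tfrac g2)$ once more. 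In every case the signature is $(0;2,\stackrel{6}{\ldots},2,\tfrac g2)$.

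It remains to identify $G$. Since $|G|=2p$ with $p$ prime, $G$ is isomorphic either to $C_{2p}$ or to $\mathbf{D}_p$ (to $C_4$ or $C_2\times C_2$ when $p=2$). The cyclic possibilities are incompatible with the signature just obtained: a surface-kernel epimorphism $\theta:\Delta\to C_{2p}$ must send $x_1,\ldots,x_6$ to the unique involution $\iota$ of $C_{2p}$ and $x_7$ to an element of order $p$, whence $\theta(x_1\cdots x_7)=\iota^6\theta(x_7)=\theta(x_7)\ne1$, contradicting the relation $x_1\cdots x_7=1$; and $C_4$ is not generated by involutions, contradicting surjectivity. Therefore $G\cong\mathbf{D}_p=\mathbf{D}_{g/2}$, and the family in question coincides with $\mathscr{V}_g$. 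I expect the only point requiring any care to be the combinatorics of the period enumeration, especially the degenerate genus $g=4$ in which $g/2=2$ makes the last period collapse to $2$; everything else is a direct application of Riemann--Hurwitz, Lagrange's theorem and the classification of groups of order $2p$.
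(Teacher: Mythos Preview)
Your proof is correct and follows essentially the same approach as the paper: determine the signature via Riemann--Hurwitz and the dimension constraint, use that $m_j\mid 2p$ to pin down the signature as $(0;2,\stackrel{6}{\ldots},2,\tfrac{g}{2})$, and then rule out the cyclic group by the lack of a surface-kernel epimorphism. You are in fact somewhat more explicit than the paper in ruling out $(h,l)=(1,4)$ and $(2,1)$ and in spelling out why $C_{2p}$ admits no surface-kernel epimorphism (the paper simply asserts this), and your separate handling of the degenerate case $g=4$ matches the paper's observation that $v=0\iff g=4$.
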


\begin{proof} Let $\mathscr{V}$ be a complex four-dimensional family of  Riemann surfaces $S$ of genus $g$ with a group of  automorphisms $G$ of order $g$ acting with signature $(h; m_1, \ldots, m_l).$  As argued in the proof of Lemma \ref{L21}, we observe that $h=0$ and $l=7$ and therefore \begin{equation} \label{uu2}
\Sigma_{j=1}^7 \tfrac{1}{m_j}=3+\tfrac{2}{g}.
\end{equation}

We denote  the number of periods $m_j$ that are different from 2 by $v$. Clearly,  $v=0$ if and only if $g = 4.$ We now assume $q=\tfrac{g}{2}$ to be prime and notice that this fact implies that if some $m_j$ is different from 2 then $m_j \geqslant q.$   We claim that $v = 1$ provided that $g \geqslant 6.$ Indeed, if $v \geqslant 2$ then \eqref{uu2} implies that $$3+\tfrac{1}{q}  \leqslant \tfrac{v}{q} +\tfrac{7-v}{2} \iff \tfrac{v-1}{2} \leqslant \tfrac{v-1}{q}$$and then $g = 4.$ Thus, the only possible signature of the action of $G$ on $S$ is $(0; 2, \stackrel{6}{\ldots},2, q).$ 

If $S$ does not belong to  $\mathscr{V}_g$ then $G \cong 2q.$ However, this situation is impossible since there are no surjective homomorphisms from a Fuchsian group of signature $(0; 2, \stackrel{6}{\ldots},2, q)$ onto $C_{2q}$; thus $\mathscr{V}=\mathscr{V}_{g}.$
\end{proof}

\begin{prop} \label{L24} Let $g \geqslant 6$ be an even integer such that $\tfrac{g}{2}$ is prime.  Then then family $\mathscr{V}_g$ consists of at most $\frac{g+2}{4}$ equisymmetric strata.

\end{prop}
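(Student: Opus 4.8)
The strategy is to count the topological equivalence classes of surface-kernel epimorphisms $\theta \colon \Delta \to \mathbf{D}_{q}$, where $q = g/2$ is prime and $\Delta$ has signature $(0; 2, \overset{6}{\ldots}, 2, q)$. By Proposition~\ref{L23} every $S \in \mathscr{V}_g$ carries such an action, so the number of equisymmetric strata is at most the number of such classes; the bound $\frac{g+2}{4} = \frac{q+1}{2}$ must emerge from this count. First I would record the structure of such a $\theta$: the last generator $x_7$ has order $q$, so $\theta(x_7) = r^{c}$ for some $c \in \{1, \ldots, q-1\}$ (it cannot be an involution since $q$ is odd), while $\theta(x_1), \ldots, \theta(x_6)$ are reflections, say $\theta(x_j) = s r^{n_j}$. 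Applying the automorphism $\phi_{m,0}\colon (r,s)\mapsto(r^{m},s)$ we may normalize $c = 1$, i.e. $\theta(x_7) = r$. Surjectivity is then automatic once the $\theta(x_j)$ are reflections and at least one is present, and the long relation $x_1\cdots x_7 = 1$ forces $\Pi (s r^{n_j}) \cdot r = 1$, which after expanding gives one linear congruence relating the $n_j$ modulo $q$ (with alternating signs), leaving essentially five free parameters in $\mathbb{Z}/q\mathbb{Z}$.

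Next I would cut down by the remaining symmetries. After fixing $\theta(x_7) = r$, the braid group $\mathfrak{B}$ acts through the transformations $\Phi_1, \ldots, \Phi_6$ permuting and conjugating the first six reflections (the braid move $\Phi_6$ involving $x_7$ must be treated, but conjugating a reflection $sr^{n_j}$ by $r$ just shifts $n_j$ by $2$, and conjugation among reflections shifts the exponents uniformly), and through the stabilizer of $r$ in $\mathrm{Aut}(\mathbf{D}_q)$, which is exactly $\{\phi_{1,\beta} : 0 \le \beta \le q-1\}$ — the inner-by-rotation automorphisms. The plan is to use these to reduce the six-tuple $(n_1, \ldots, n_6)$ to a short normal form, very much in the spirit of Claims~1 and~2 in the proof of Proposition~\ref{L14}: the map $u \mapsto 2 - u^{-1}$ appearing there (acting on a pair of reflection exponents with the third fixed) has a fixed point and a single long orbit, so most configurations collapse. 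The residual invariant, which cannot be removed, should be a single parameter taking roughly $\frac{q+1}{2}$ values — the expected source of the bound.

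The main obstacle, as in Proposition~\ref{L14}, is organizing the casework so that it genuinely closes: one must show that after the normalizations there are at most $\frac{q+1}{2}$ orbits, not merely exhibit some orbits. Concretely I expect a parameter $u$ surviving (an analogue of the $\theta_u = (s,s,sr,sr,sr^u,sr^u,\ldots)$ family in the Type~2 analysis), with $u$ and $-u$, and $u$ and $f(u)$ for the various fractional-linear involutions $f$, identified; since the group generated by $u \mapsto -u$ and $u \mapsto 2 - u^{-1}$ on $\mathbb{P}^1(\mathbb{F}_q)$ is small (dihedral of bounded order), the number of orbits on the $q+1$ points of the projective line — hence on the relevant affine values — is on the order of $\frac{q+1}{2}$, and one checks the constant works out exactly. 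The one genuinely new ingredient compared to the three-dimensional case is handling the order-$q$ generator $x_7$ and the braid move $\Phi_6$ that mixes it with $x_6$; but since $\langle r \rangle$ is normal and conjugation by $r^{\pm 1}$ acts transparently on reflections, this adds only bookkeeping, not a conceptual difficulty. A short \emph{SageMath} verification (as the paper already anticipates using) can confirm the exact count for small primes $q$ and guard against an off-by-one in the final tally.
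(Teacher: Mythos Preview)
Your plan follows the same strategy as the paper: normalise the surface-kernel epimorphism via braid moves $\Phi_i$ and automorphisms $\phi_{\alpha,\beta}$ of $\mathbf{D}_q$, and then count the surviving normal forms. The paper executes the casework explicitly and arrives at the representatives $(s,s,s,s,s,sr,r^{-1})$ together with $(s,s,s,sr,sr,sr^{v},r^{-v})$ for $v\in\{1,\dots,\tfrac{q-1}{2}\}$, giving at most $\tfrac{q-1}{2}+1=\tfrac{g+2}{4}$ classes; the identification $v\sim -v$ comes from $\Phi_6^2$.

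There is one concrete error in your final heuristic. The map $f(u)=2-u^{-1}$ corresponds to $\left(\begin{smallmatrix}2&-1\\1&0\end{smallmatrix}\right)\in\mathrm{PGL}_2(\mathbb{F}_q)$, which has trace $2$ and is therefore parabolic with unique fixed point $u=1$; its order is $q$, not bounded independently of $q$. Hence the group generated by $u\mapsto -u$ and $f$ is certainly not ``dihedral of bounded order'', and an orbit count on $\mathbb{P}^1(\mathbb{F}_q)$ under it would yield far fewer than $\tfrac{q+1}{2}$ classes. The point is that $f$ is not a residual symmetry of a final parameter at all: in the paper (and in Claim~1 of Proposition~\ref{L14}) it is used \emph{during} the reduction to force an adjacent exponent to $0$ or $1$, after which it has done its job. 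The only equivalence that survives on the final parameter $v$ is $v\sim -v$, and that is what delivers the bound. So your outline is viable, but the closing $\mathbb{P}^1$-orbit argument must be replaced by honest casework along the lines the paper carries out; also note that once you fix $\theta(x_7)=r$ you lose the $\phi_{\alpha,0}$ with $\alpha\neq 1$ that Claims~1 and~2 rely on, so you will either have to renormalise $\theta(x_7)$ after each step or adapt those claims accordingly.
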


\begin{proof} Set $g \geqslant 6$ such that $q=\tfrac{g}{2}$ is prime. Let $\theta: \Delta \to \mathbf{D}_{q}=\langle r, s : r^q=s^2=(sr)^2=1\rangle$ be a surface-kernel epimorphism representing an action of $G$ on $S \in \mathscr{V}_g,$ with $\Delta$ canonically presented as in the proof of  Lemma \ref{L22}. Similarly as done in the proof of Proposition \ref{L14} we shall introduce some notation. We write $m \in \{1, \ldots, q-1\}$ and $n_j \in \{0, \ldots, q-1\} $ for $j=1, \ldots, 6$  such that $$ sr^{n_j}=\theta(x_j) \,\, \mbox{ for } \,\, j=1, \ldots, 6 \,\, \mbox{ and } \,\, r^m=\theta(x_7).$$If $n_j \neq 0$ then we shall denote by $m_j$ its  inverse in the field of $q$ elements. The automorphism of $\mathbf{D}_{q}$ given by $(r,s) \mapsto (r^{\alpha}, sr^{\beta})$ is denoted by $\phi_{\alpha,\beta},$ for
$1 \leqslant \alpha \leqslant q-1$ and $0 \leqslant \beta \leqslant q-1.$ We also restate two  claims which were  proved in the proof of Proposition \ref{L14}.

\s

{\bf Claim 1.} If $n_j=1$ and $n_k=0$ for $k < j$ then we can assume $n_{j+1}=0$ or $n_{j+1}=1.$

\s

{\bf Claim 2.} Up to equivalence, we can assume $n_1=n_2=0.$

\s

We shall proceed by studying separately the cases $n_3=0$ and $n_3 \neq 0.$

\s

{\it Type 1.} Suppose $n_3=0.$ 

\s

Assume $n_4=0.$ If $n_5 \neq 0$ then we consider the automorphism $\phi_{m_5,0}$  to notice that, up to equivalence, $n_5=1.$ Thus, $\theta$ is equivalent to either $(s,s,s,s,s,sr^{u},r^{-u})$ or $(s,s,s,s,sr,sr^v, r^{1-v})$ where $u \neq 0$ and $v\neq 1,$ according to $n_5=0$ or $n_5=1$. Note that in the first case, as $u \neq 0,$ the epimorphism  is equivalent to the one in which $u=1;$ namely, equivalent to \begin{equation} \label{epp11}(s,s,s,s,s,sr,r^{-1})\end{equation}Meanwhile, in the latter case, by Claim 2, the epimorphism is equivalent to \begin{equation} \label{epp3}(s,s,s,s,sr,s,r) \,\, \mbox{ and, in turn, equivalent to }\,\,  (s,s,s,s,s,sr^{-1},r).\end{equation}

Now, the transformation $\phi_{-1, 0}\circ \Phi_5$ provides an equivalence between \eqref{epp11} and \eqref{epp3}.

\s

Assume $n_4 \neq 0.$ We then consider the automorphism $\phi_{m_4,0}$   to notice that, up to equivalence, $n_4=1$ and, consequently, by Claim 2, we have that $n_5=0$ or $n_5=1.$ Thereby, $\theta$ is equivalent to either $$(s,s,s,sr,s,sr^u, r^{-1-u}) \,\, \mbox{ or } \,\, \theta_v=(s,s,s,sr,sr,sr^v, r^{-v})$$where $u \neq -1$ and $v \neq 0.$ The first case is equivalent to \eqref{epp3}; indeed, we can consider the transformation $\phi_{-1,0} \circ \Phi_4$ to see that $\theta$ is equivalent to $(s,s,s,s,sr,sr^{-u}, r^{1+u})$ and therefore, by Claim 2, we can assume $u=0.$ For the second case, consider $\Phi_6 \circ \Phi_6$ to notice that $\theta_v$ and $\theta_{-v}$ are equivalent. It follows that there are  at most $\tfrac{q-1}{2}$ pairwise non-equivalent actions given by \begin{equation} \label{eppvarios}(s,s,s,sr,sr,sr^v, r^{-v}) \,\, \mbox{ for some }\,\, v \in \{1, \ldots, \tfrac{q-1}{2}\}.\end{equation}

\s

{\it Type 2.} Suppose $n_3 \neq 0.$ As before, consider the automorphism $\phi_{m_3,0}$  to assume $n_3=1.$ Moreover, again by Claim 2, we see that, up to equivalence, $n_4=0$ or $n_4=1.$ However, we only need to consider the case $n_4=1$ due to the fact that, if $n_4=0$ then the transformation $\phi_{-1,0} \circ \Phi_3$ provides an equivalence between $\theta$ and either \eqref{epp3} or some \eqref{eppvarios}. Thus, we assume that $n_4=1.$ If $n_5=0$ then the transformation $\Phi_4 \circ \Phi_5 \circ \phi_{-1,0}$ shows that the epimorphism is equivalent to either \eqref{epp3} or some \eqref{eppvarios}. Then, we can assume $n_5 \neq 0$ and therefore the epimorphism is equivalent to one of the form \begin{equation*} \theta_{u,v}= (s,s,sr,sr,sr^{u}, sr^{u-v}, r^v)  \end{equation*}where $u,v \in \{1, \ldots, q-1\}.$ Note that the powers of $\Phi_5$ provide the equivalences $\theta_{u,v} \cong \theta_{u-\lambda v,v}$ where $\lambda \in \{1, \ldots, q-1\} .$ We  choose $\lambda=\tfrac{u-v}{v}$ to conclude that $\theta$ is equivalent to $$\theta_{v,v}=(s,s,sr,sr,sr^{v}, s, r^v) \,\, \mbox{ for some }\,\, v \in \{1, \ldots, q-1\}.$$ Now, consider $\phi_{-1,0} \circ \Phi_3 \circ \Phi_4 \circ \Phi_5$ to conclude that $\theta_{v,v}$ is equivalent to \eqref{eppvarios}.
\s

All the above says that $\theta$ is equivalent to either \eqref{epp11} or some \eqref{eppvarios}. Hence $\mathscr{V}_g$ consists of at most $\tfrac{q-1}{2}+1=\tfrac{g+2}{4}$ equisymmetric strata, as claimed.
\end{proof}

\begin{prop} \label{celular}
 Let $g \geqslant 6$ be an even integer such that $\tfrac{g}{2}$ is odd. If $S \in \mathscr{V}_g$ then the Jacobian variety $JS$ decomposes, up to isogeny, as$$JS\sim A\times \Pi_{d\in \Omega(\frac{g}{2})} B_d^2, $$
where $A$ is an abelian surface  and $B_d$ is an abelian variety of dimension $\varphi(\frac{g}{2d}).$ Moreover $$A \sim JS_{\langle r \rangle}\,\, \mbox{ and } \,\, \Pi_{d \in \Omega(\frac{g}{2})}B_d \sim JS_{\langle s \rangle}$$and therefore $JS\sim JS_{\langle r \rangle}\times JS_{\langle s \rangle}^2. $

\end{prop}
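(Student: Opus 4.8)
The plan is to follow the proof of Proposition~\ref{desco3dim}(a) almost verbatim, now with the group $\mathbf{D}_{n}$ where $n=\tfrac g2$ and with the additional branch value of order $n$ taken into account. Write $n=\tfrac g2$; since $n$ is odd, the rational irreducible representations of $\mathbf{D}_n=\langle r,s : r^n=s^2=(sr)^2=1\rangle$ are the trivial one $\chi_1$, the representation $\chi_2: r\mapsto 1,\ s\mapsto -1$, and the representations $W_d=\oplus_{\sigma\in G_d}\psi_d^{\sigma}$ of dimension $\varphi(\tfrac nd)$ for $d\in\Omega(n)$, all of Schur index one. Hence the group algebra decomposition of $JS$ with respect to $G$ is $JS\sim B_{\chi_2}\times\Pi_{d\in\Omega(n)}B_d^{2}$, the factor $B_1$ being disregarded because $S_G$ has genus zero.

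The first thing I would check is that this decomposition, and the dimensions of its factors, do not depend on the equisymmetric stratum containing $S$. This is precisely where the hypothesis that $\tfrac g2$ is odd enters: when $n$ is odd all involutions of $\mathbf{D}_n$ are conjugate, so the dimension of the subspace of $\chi_2$ (resp.\ of $\psi_d$) fixed by $\langle\theta(x_k)\rangle$ is independent of the surface-kernel epimorphism $\theta$ for $k=1,\dots,6$, each $\theta(x_k)$ being a reflection. Moreover $\theta(x_7)$ has order $n$ and therefore generates $\langle r\rangle$, so the subspace fixed by $\langle\theta(x_7)\rangle$ equals the one fixed by $\langle r\rangle$. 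Consequently it suffices to carry out the computation for the explicit epimorphism exhibited in the proof of Lemma~\ref{L22}.

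I would then apply formula \eqref{dimensiones1} with $\gamma=0$ and $l=7$. A reflection fixes a one-dimensional subspace of $\psi_d$ and only the zero subspace of $\chi_2$, while the rotation $r$ fixes all of $\chi_2$ and, because $d$ is a proper divisor of $n$, only the zero subspace of $\psi_d$. This produces $\dim B_{\chi_2}=-1+\tfrac12(6\cdot1+0)=2$ and $\dim B_d=\tfrac12\varphi(\tfrac nd)\bigl(-2+\tfrac12(6\cdot1+2)\bigr)=\varphi(\tfrac nd)$; as a sanity check, $\dim\bigl(B_{\chi_2}\times\Pi_d B_d^2\bigr)=2+2\Sigma_{d\in\Omega(n)}\varphi(\tfrac nd)=2+2(n-1)=g$. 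Finally, invoking the induced isogeny \eqref{decoind1} with $H=\langle r\rangle$ and with $H=\langle s\rangle$, and using that $\chi_2^{\langle r\rangle}=1$, $\psi_d^{\langle r\rangle}=0$, $\chi_2^{\langle s\rangle}=0$ and $\psi_d^{\langle s\rangle}=1$, one gets $JS_{\langle r\rangle}\sim B_{\chi_2}$ and $JS_{\langle s\rangle}\sim\Pi_{d\in\Omega(n)}B_d$; the statement follows after setting $A=B_{\chi_2}$, and in particular $JS\sim JS_{\langle r\rangle}\times JS_{\langle s\rangle}^{2}$. The only genuinely delicate point is the stratum-independence established in the second step, which fails when $\tfrac g2$ is even; the rest is the routine bookkeeping of \eqref{dimensiones1} and \eqref{decoind1}.
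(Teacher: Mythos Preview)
Your proof is correct and follows essentially the same approach as the paper's own argument: both invoke the representation theory of $\mathbf{D}_n$ with $n=\tfrac g2$ odd, use the conjugacy of all involutions to obtain stratum-independence, apply formula~\eqref{dimensiones1} to compute $\dim B_{\chi_2}=2$ and $\dim B_d=\varphi(\tfrac nd)$, and then identify the factors via~\eqref{decoind1}. Your write-up is in fact slightly more explicit than the paper's (you spell out why $\langle\theta(x_7)\rangle=\langle r\rangle$ regardless of the epimorphism, and include the dimension sanity check), but the underlying argument is identical.
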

\begin{proof}

Let $S \in \mathscr{V}_g$ with $g \geqslant 6$ and $n=\tfrac{g}{2}$ odd. As noticed in the proof of Proposition \ref{desco3dim} and keeping the same notations as in there, the non-trivial rational  irreducible representations of $\mathbf{D}_n$ are $\chi_2$ and $W_d$ with $d \in \Omega(n)$ and therefore the group algebra decomposition of each $JS$ with respect to $G$ is given by \begin{equation} \label{rojo1}JS \sim B_2 \times \Pi_{d \in \Omega(n)}B_d^2.\end{equation}

The fact that  the involutions of $\mathbf{D}_n$ are conjugate implies that the  dimension of  $B_2$ and $B_d$ in \eqref{rojo1}  does not depend on the stratum to which $S$ belongs. So, we assume the action of $G$ on $S$ to be represented by  $(s,s,s,s,s,rs,r).$  Consider the equation \eqref{dimensiones1} to see that $\dim B_2 =2$ and $\dim B_d =\varphi(\tfrac{n}{d}).$ In addition, we consider the induced isogeny \eqref{decoind1} with $H= \langle r \rangle$ and $H=\langle s \rangle$ to see that $$B_2 \sim JS_{\langle r \rangle}\,\, \mbox{ and } \,\, \Pi_{d \in \Omega(n)}B_d \sim JS_{\langle s \rangle}$$respectively, and therefore the proof follows after setting $A=B_2.$
\end{proof}

\begin{rema}
We end this section by remarking two facts concerning the family $\mathscr{V}_g.$
\begin{enumerate}
\item The behavior for $g=4$ is completely different. Indeed, as noticed in \cite{CIg4} (see also \cite{bciracsam})  the family $\mathscr{V}_4$ consists of two  strata, represented by $\theta_1=(r,r,r,r,r,s,sr)$ and $\theta_2=(r,r,r,s,s,s,sr).$ By proceeding analogously as done in the proof of Proposition \ref{celular}, one sees that if $S \in \mathscr{V}_4$ then:

\s

\begin{enumerate}
\item if $S$ belongs to the  stratum defined by $\theta_1$ then  $JS\sim A_1 \times A_2,$ where $A_1 \sim JS_{\langle s \rangle}$ and $A_2 \sim JS_{\langle sr \rangle}$ are abelian surfaces, and
\s

\item if $S$ belongs to the  stratum defined by $\theta_2$ then
$JS\sim E_1\times E_2\times A,$ where $E_1 \sim JS_{\langle r \rangle}$ and $E_2 \sim JS_{\langle s \rangle}$ are elliptic curves and $A \sim JS_{\langle sr \rangle}$ is an abelian surface. 
\end{enumerate}

\s

\item As the reader could expect, if $n=\tfrac{g}{2}$ is even then Propositions \ref{L24} and \ref{celular} are not longer true. For instance, the  stratum defined by  $\eta=(r^{\frac{n}{2}}, r^{\frac{n}{2}}, s,s,s,rs,r)$ is not equivalent to any of the actions determined in Proposition \ref{L24}. Furthermore, if $S$ belongs to the stratum defined by $\eta$ then, by proceeding  as in the proof of Proposition \ref{celular}, one sees that if $n \equiv 0 \mbox{ mod } 4$ then  $JS$ contains two elliptic curves, and if $n \equiv 2 \mbox{ mod } 4$ then  $JS$ contains two elliptic curves and an abelian surface.

\end{enumerate}
\end{rema}

\section{The families $\mathscr{U}^1_g$ and $\mathscr{U}^2_g$} \label{s7}

\begin{prop} \label{L33} Let $g \geqslant 11$ be an odd integer such that  $g-1$ is twice a prime number. Then $\mathscr{U}_g^1$ and $\mathscr{U}_g^2$ are the unique complex four-dimensional families with $g-1$ automorphisms.
\end{prop}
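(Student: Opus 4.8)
The plan is to mimic the uniqueness arguments of Propositions \ref{L13} and \ref{L23}, but now sorting \emph{two} families out of the signature analysis. Let $\mathscr{U}$ be any complex four-dimensional family of Riemann surfaces of genus $g$ with a group $G$ of order $g-1$ acting with signature $(h; m_1, \ldots, m_l)$. First I would invoke the Riemann--Hurwitz formula exactly as in Lemma \ref{L31}: since $|G| = g-1$ and the Teichm\"uller dimension $3h-3+l$ equals $4$, one gets $4 > 2h + l - \Sigma_{j=1}^l \tfrac1{m_j}$, which forces $h \in \{0,1\}$ (the case $h=0$ giving $l=7$ with $3 < \Sigma \tfrac1{m_j} \le \tfrac72$, hence the five signatures listed in Lemma \ref{L31}, and $h=1$ giving $l=4$ with all $m_j=2$). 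In the $h=0$ case the order of $G$ would be $\tfrac43(g-1)$, $\tfrac{2a}{2a-1}(g-1)$, $\tfrac{12}{11}(g-1)$, $\tfrac{24}{23}(g-1)$ or $\tfrac{60}{59}(g-1)$ for the respective signatures; none of these equals $g-1$ (the fraction is never $1$), so $h=0$ is impossible. Therefore the signature of the action is necessarily $(1; 2, \stackrel{4}{\ldots}, 2)$, matching the families $\mathscr{U}^1_g$ and $\mathscr{U}^2_g$ of Lemma \ref{L32}.

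Next I would identify $G$ up to isomorphism. Writing $q = \tfrac{g-1}{2}$ (a prime, by hypothesis), $G$ is a group of order $2q$, hence isomorphic to either $C_{2q}$ or $\mathbf{D}_q$. The point is that \emph{both} occur: for the cyclic group one has the surface-kernel epimorphism \eqref{aa1}, and for the dihedral group the one displayed just after it in Lemma \ref{L32}. So the family $\mathscr{U}$ must be one of $\mathscr{U}^1_g$ (if $G \cong C_{2q}$) or $\mathscr{U}^2_g$ (if $G \cong \mathbf{D}_q$). There is nothing further to rule out here, unlike in Propositions \ref{L13} and \ref{L23}, where the cyclic case was excluded because a genus-zero signature with only involutions cannot generate a cyclic group; here, because $h=1$, a cyclic action with signature $(1; 2,\stackrel{4}{\ldots},2)$ genuinely exists. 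One should check that the $C_{2q}$ action and the $\mathbf{D}_q$ action give genuinely \emph{different} families, i.e.\ that $\mathscr{U}^1_g \ne \mathscr{U}^2_g$ as subsets of $\mathscr{M}_g$; this is immediate since a generic surface in $\mathscr{U}^1_g$ has an abelian automorphism group of order $g-1$ whereas a generic surface in $\mathscr{U}^2_g$ has a non-abelian one.

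Finally I would confirm four-dimensionality and the ``$g-1$ automorphisms'' bookkeeping: the Teichm\"uller space of a Fuchsian group of signature $(1;2,\stackrel{4}{\ldots},2)$ has complex dimension $3\cdot 1 - 3 + 4 = 4$, so both $\mathscr{U}^1_g$ and $\mathscr{U}^2_g$ are indeed complex four-dimensional families, and by Lemma \ref{L31} no four-dimensional family of genus $g$ can have strictly more than $g-1$ automorphisms when $g-1$ is a power of two --- but here we only need the weaker fact, already extracted from the signature analysis above, that order exactly $g-1$ forces signature $(1;2,\stackrel4\ldots,2)$; the hypothesis that $g-1$ is twice a prime is what pins $G$ down to exactly two isomorphism types. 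The main obstacle I anticipate is making the Riemann--Hurwitz case analysis for $h=1$ fully rigorous --- in particular being careful that $l=4$ with all periods $2$ is the \emph{only} solution with $h=1$ (any period $\ge 3$, or $l \ge 5$, or $l \le 3$, must be excluded by the dimension constraint $3h-3+l=4$ combined with $4 > 2h+l-\Sigma \tfrac1{m_j}$), together with the lower bound $g \ge 11$ ensuring that the excluded genus-zero cases (which would force $g \in \{4, 2a, 12, 24, 60\}$ and hence even, or else non-prime-related) do not sneak back in for small odd $g$.
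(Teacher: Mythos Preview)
Your proposal contains a genuine gap in the signature analysis. You invoke the strict inequality $4 > 2h + l - \Sigma_{j=1}^l \tfrac{1}{m_j}$ from Lemma~\ref{L31}, but that inequality comes from the hypothesis $|G| > g-1$. Here $|G| = g-1$ exactly, so Riemann--Hurwitz gives the \emph{equality}
\[
2(g-1) = (g-1)\bigl(2h - 2 + l - \Sigma_{j=1}^l \tfrac{1}{m_j}\bigr), \qquad \text{i.e.}\quad 4 = 2h + l - \Sigma_{j=1}^l \tfrac{1}{m_j}.
\]
Combined with $3h-3+l=4$ this yields three cases, not two: $(h,l)=(0,7)$ with $\Sigma \tfrac{1}{m_j}=3$; $(h,l)=(1,4)$ with $\Sigma \tfrac{1}{m_j}=2$ (hence all $m_j=2$); and $(h,l)=(2,1)$, which forces $m_1=1$ and is discarded. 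In the case $h=0$ the condition $\Sigma \tfrac{1}{m_j}=3$ (equality, not $>3$) does \emph{not} give the five signatures of Lemma~\ref{L31}; it gives instead
\[
(0; 2,\stackrel{5}{\ldots},2,4,4), \qquad (0; 2,\stackrel{5}{\ldots},2,3,6), \qquad (0; 2,\stackrel{4}{\ldots},2,3,3,3),
\]
for each of which the group order is exactly $g-1$, so your ``the fraction is never $1$'' argument does not apply.

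These three signatures are precisely where the hypotheses $g \geqslant 11$ and $g-1=2q$ with $q$ prime do their real work: any group acting with one of these signatures has order divisible by $3$, $4$ or $6$, which forces $q\in\{2,3\}$ and hence $g\in\{5,7\}$, contradicting $g\geqslant 11$. This is the paper's argument, and it is the step your proposal is missing. Once the genus-zero signatures are correctly eliminated in this way, your treatment of the $(1;2,\stackrel{4}{\ldots},2)$ case and the identification of $G$ as $C_{2q}$ or $\mathbf{D}_q$ is fine.
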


\begin{proof} Let $g \geqslant 11$ be an odd integer and write $g-1=2q$ where $q \geqslant 5$ is a prime number. As the cyclic and dihedral group are the unique groups of order $2q,$ we only need to verify that $(1; 2, \stackrel{4}{\ldots}, 2)$ is the only possible signature for the action of a group $G$ of order $2q$ on a complex-four dimensional family of  Riemann surfaces of genus $1+2q$.

A short computation shows that if signature of the action is not $(1; 2, \stackrel{4}{\ldots}, 2)$ then it is $(h; m_1, \ldots, m_l)$ where either $(h,l)=(2,1)$ or $(h,l)=(0,7).$ It is straightforward to see that the former case is impossible. So, we assume $(h,l)=(0,7)$ and then $\Sigma_{j=1}^7 \tfrac{1}{m_j}=3.$  As argued in the proof of Lemma \ref{L21} and \ref{L31}, one sees that the number $v$ of periods $m_j$ that are different from 2 are either two of three. 

\begin{enumerate}
\item If $v=2$ then the signature of the action is $(0; 2, \stackrel{5}{\ldots},2,  a, b)$ where $a,b \geqslant 3$ satisfy $$\tfrac{1}{a}+\tfrac{1}{b}=\tfrac{1}{2} \,\, \mbox{ and therefore } \,\, a=b=4 \, \mbox{ or } \, a=3, b=6.$$
\item If $v=3$ then the signature of the action is $(0;  2, \stackrel{4}{\ldots},2,  a,b,c)$ where $a,b,c \geqslant 3$  satisfies  $$\tfrac{1}{a}+\tfrac{1}{b}+\tfrac{1}{c}=1 \,\, \mbox{ and therefore }\,\, a=b=c=3.$$
\end{enumerate}

It follows that the order of the group is divisible by $3,4$ or 6. Thus, $q=2$ or $3$ and therefore the genus equals $g=5$ or $g=7$; a contradiction. 
\end{proof}

\begin{rema} The exceptional signatures appearing in the proof of the  proposition above are realized for the unconsidered cases $g=5$ and 7 (see, for example, \cite[Lemma 8]{bciracsam} for $g=5$).

%
%
\end{rema}

\begin{prop} \label{L34} Let $g \geqslant 3$ be an odd integer. 
The family $\mathscr{U}_g^1$ is equisymmetric.
\end{prop}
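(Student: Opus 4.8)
The plan is to show that any surface-kernel epimorphism $\theta:\Delta\to C_{g-1}$, where $\Delta$ has signature $(1;2,\stackrel{4}{\ldots},2)$, is equivalent to the epimorphism \eqref{aa1} of Lemma \ref{L32}(a). Since $C_{g-1}$ is abelian, the automorphism $\omega$ and the braid action interact in a particularly transparent way, so this should be considerably easier than the dihedral analogue in Proposition \ref{L14}.

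First I would record the constraints on the generating vector $\theta=(\theta(\alpha_1),\theta(\beta_1),\theta(x_1),\ldots,\theta(x_4))$. Writing the group additively as $\mathbb{Z}/(g-1)$, each $\theta(x_i)$ must be an element of order $2$, hence $\theta(x_i)=\tfrac{g-1}{2}$ for every $i$ (note $g-1$ is even since $g$ is odd, so there is a unique such element). The surface-kernel condition forces $\sum_{i=1}^4\theta(x_i)=0$, which is automatic here, and the long relation $[\alpha_1,\beta_1]x_1x_2x_3x_4=1$ is vacuous in an abelian group. The only genuine constraint left is surjectivity: $\theta(\alpha_1)$ and $\theta(\beta_1)$ together with $\tfrac{g-1}{2}$ must generate $\mathbb{Z}/(g-1)$. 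So the data reduces to a pair $(a,b)=(\theta(\alpha_1),\theta(\beta_1))$ with $\langle a,b,\tfrac{g-1}{2}\rangle=\mathbb{Z}/(g-1)$.

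Next I would use the braid group $\mathfrak{B}$ and $\mathrm{Aut}(C_{g-1})$ to normalize $(a,b)$. The transformations $A_{1,n}$ and $A_{2,n}$ from \eqref{bd}-context act on $(a,b)$ by $(a,b)\mapsto(a,b+na)$ and $(a,b)\mapsto(a+nb,b)$ respectively — these are exactly the elementary row operations, so they let me run the Euclidean algorithm on the pair $(a,b)$ and replace it by $(\gcd(a,b,?),0)$-type normal forms; the transformations $C_{1,i}, C_{2,i}$ modify $\alpha_1,\beta_1$ by the (trivial, since central/order-two) elements $x_1=\beta_1^{-1}wz$ etc., so in the abelian quotient they act by adding $\pm\tfrac{g-1}{2}$ to $a$ or $b$. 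Combining these with an automorphism $r\mapsto r^{\alpha}$ of $C_{g-1}$, I can bring $(a,b)$ to the form $(1,0)$: indeed the surjectivity condition guarantees $\gcd(a,b,\tfrac{g-1}{2},g-1)=1$, so after Euclidean reduction one coordinate becomes a unit, which an automorphism turns into $1$, and the other can be cleared to $0$ using the row operations (absorbing the extra $\tfrac{g-1}{2}$ factor when $g-1\equiv 2\bmod 4$ forces $a$ or $b$ to be even). This yields precisely \eqref{aa1}, so $\mathscr{U}_g^1$ has a single equisymmetric stratum, i.e.\ it is equisymmetric.

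The main obstacle I anticipate is the bookkeeping when $g-1$ is even but not divisible by $4$: then $\tfrac{g-1}{2}$ is odd, and the surjectivity condition can be satisfied with $\gcd(a,b)$ even, so the reduction to $(1,0)$ must genuinely exploit the presence of the order-two element $\tfrac{g-1}{2}$ in the generating set (via the $C_{j,i}$ moves) rather than $(a,b)$ alone; one must check that the resulting normal form is still unique and does not split into several inequivalent cases. A secondary point to verify carefully is that the $C_{j,i}$ transformations really do act as claimed on the images $\theta(\alpha_1),\theta(\beta_1)$ — this requires unwinding the definitions of $w,z,x_1,x_2,y_1,y_2$ and using that all $\theta(x_k)$ coincide — but once that is in place the argument is a routine matrix/Euclidean computation.
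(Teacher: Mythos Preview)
Your proposal is correct and follows essentially the same strategy as the paper: observe that all $\theta(x_j)$ are forced to be the unique involution, then normalize the pair $(\theta(\alpha_1),\theta(\beta_1))$ to $(t,1)$ using the transformations $A_{1,n},A_{2,n}$ together with automorphisms of $C_{g-1}$. In fact you are more careful than the paper's terse ``without loss of generality we can assume $u=1$'': you correctly flag that when $g-1\equiv 2\pmod 4$ one may have $\gcd(u,v,g-1)=2$, so that the $\mathrm{SL}_2(\mathbb{Z})$-moves and automorphisms alone do not suffice and one genuinely needs the $C_{j,i}$ transformations (which shift $u$ or $v$ by $(g-1)/2$) to reach a unit coordinate.
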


\begin{proof} For $g=3$ we refer to \cite[Table 5, 3.b]{Brou}.
Assume $g \geqslant 5.$
Let $\Delta$ be a Fuchsian group of signature $(1; 2, \stackrel{4}{\ldots}, 2)$ canonically presented as in the proof of Lemma \ref{L32} and let $\theta: \Delta \to G=\langle t : t^{g-1}=1 \rangle$ be a surface-kernel epimorphism representing an action of $G$ on $S \in \mathscr{U}_g^1.$  We have to prove that $\theta$ is equivalent to the surface-kernel epimorphism \eqref{aa1} in the proof of Lemma \ref{L32}. Clearly $$\theta(x_j)=t^{(g-1)/2} \,\, \mbox{ for } \,\, j=1,2,3,4. $$If we write $\theta(\alpha_1)=t^u$ and $\theta(\beta_1)=t^v$ then as, $\theta$ is surjective, without loss of generality, we can assume $u=1.$ Consider the transformation $A_{1,-v}$ (see \S\ref{braid}) to see that $\theta$ is equivalent to \eqref{aa1}, as desired. 
\end{proof}
%
For $n \geqslant 2$ even, let  $\Lambda(n)=\{ 1 \leqslant d < \tfrac{n}{2}: d \text{ divides } n  \text{ and }  \tfrac{dn}{2} \not\equiv 0 \mbox{ mod } n\}.$

\begin{prop}
Let $g\geqslant 3$ be an odd integer. If $S \in \mathscr{U}_g^1$ then the Jacobian variety $JS$ decomposes, up to isogeny, as follows.
\begin{enumerate}
\s

\item If $\tfrac{g-1}{2}$ is even then $$JS\sim E \times \Pi_{d\in \Lambda(g-1)}B_d$$where $E$ is an elliptic curve isogenous to $JS_G$ and $B_d$ is an abelian variety of dimension $2\varphi(\tfrac{g-1}{d}).$

\item If $\tfrac{g-1}{2}$ is odd then $$JS\sim E \times A \times  \Pi_{d\in \Lambda(g-1)}B_d$$where $A$ is an abelian surface and $E$ and $B_d$ are as before. 
\end{enumerate}
\end{prop}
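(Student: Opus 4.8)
The plan is to apply the group algebra decomposition machinery of Subsection \ref{jacos} to the cyclic group $G=C_{g-1}$ acting with signature $(1;2,\stackrel{4}{\ldots},2)$ via the surface-kernel epimorphism \eqref{aa1}, which by Proposition \ref{L34} represents the unique topological class of the action. First I would recall that the rational irreducible representations of $C_{g-1}$ are indexed by the divisors of $n:=g-1$: for each divisor $e\mid n$ there is a rational irreducible representation $V_e$ of degree $\varphi(e)$ and Schur index $1$, whose associated complex irreducible characters are the primitive $e$-th roots of unity. It is more convenient here to index by $d$ with $e=n/d$, matching the notation $\Omega, \hat\Omega, \Lambda$; the trivial representation corresponds to $d=n$ (i.e. $e=1$) and gives the factor isogenous to $JS_G$, which since the quotient has genus $h=1$ is an elliptic curve $E$. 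So the group algebra decomposition \eqref{eq:gadec} reads $JS\sim E\times\prod_{d} B_d$ with one factor $B_d$ for each proper divisor $d$ of $n$ (here $n_l=\varphi(e)/s=\varphi(e)$ but each $B_d$ appears with the correct multiplicity already absorbed, so one must be slightly careful: since the Schur index is $1$ and $d_{V_e}=\varphi(e)$, in fact $n_l=\varphi(e)$ and $B_d$ has dimension $\dim B_d=\dim A_{e_l}/n_l$; I will phrase the final answer directly in terms of $A_{e_l}=B_d^{n_l}$ written as $B_d$ with appropriate dimension, as the proposition does).

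The key computation is then to evaluate the dimension formula \eqref{dimensiones1}. Here $\gamma=1$, $l=4$, and each $\theta(x_j)=t^{n/2}$ generates the unique subgroup of order $2$, call it $H_0=\langle t^{n/2}\rangle$. Thus
\begin{equation*}
\dim B_d = k_{V_e}\Bigl[d_{V_e}\cdot 0 + \tfrac12\cdot 4\cdot\bigl(d_{V_e}-d_{V_e}^{H_0}\bigr)\Bigr] = 2\,k_{V_e}\bigl(d_{V_e}-d_{V_e}^{H_0}\bigr),
\end{equation*}
where $e=n/d$. Now a character $\zeta$ of $C_{g-1}$ (a root of unity of order $e$) is fixed by $H_0=\langle t^{n/2}\rangle$ exactly when $\zeta^{n/2}=1$, i.e. when $e\mid n/2$, equivalently when $\tfrac{dn}{2}\equiv 0\pmod n$; otherwise $d_{V_e}^{H_0}=0$. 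This is precisely the dichotomy encoded by the set $\Lambda(n)$: for $d\in\Lambda(n)$ we have $d_{V_e}^{H_0}=0$ so $\dim B_d = 2\varphi(e)=2\varphi(\tfrac{n}{d})$ (after absorbing multiplicities as in the statement), while for $d\mid n$ with $d\notin\Lambda(n)$ and $d<n$ the representation $V_e$ is fixed by $H_0$ and contributes $0$ — with the single exception $d=n/2$ (i.e. $e=2$), the sign character $t\mapsto -1$, whose $V_e$ has degree $1$, is \emph{not} fixed by $H_0$ (since $(-1)^{n/2}=-1$ when $n/2$ is odd, $=1$ when $n/2$ is even), and thus contributes a factor of dimension $2\cdot 1=2$, an abelian surface $A$, exactly when $\tfrac{g-1}{2}$ is odd. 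I would then read off: when $\tfrac{g-1}{2}$ is even, the $d=n/2$ factor vanishes and $JS\sim E\times\prod_{d\in\Lambda(g-1)}B_d$; when $\tfrac{g-1}{2}$ is odd, the $d=n/2$ factor is an abelian surface $A$ and $JS\sim E\times A\times\prod_{d\in\Lambda(g-1)}B_d$. A dimension check $g=1+\dim E+\sum\dim B_d$ (and $+\dim A$ in the odd case) using $\sum_{d\mid m,\,d\ge 1}\varphi(m/d)=m$ confirms consistency.

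The main obstacle, and the step requiring the most care, is the bookkeeping of multiplicities versus the $k_{V_e}$ factor in \eqref{dimensiones1} and reconciling it with the way the proposition packages the factors: the formula \eqref{eq:gadec} gives $B_e^{n_e}$ with $n_e=d_{V_e}=\varphi(e)$ for a cyclic group (Schur index $1$), so an individual $B_e$ has dimension $\dim A_{e_l}/\varphi(e)$, and one must check this matches $2\varphi(\tfrac{n}{d})$ — meaning $\dim A_{e_l}=2\varphi(e)^2$, consistent with $k_{V_e}=\varphi(e)$ being the degree of the cyclotomic field $\mathbb{Q}(\zeta_e)$ over $\mathbb{Q}$. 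So the cleanest route is: compute $\dim A_{e_l}$ directly from $k_{V_e}$ and $d_{V_e}^{H_0}$, then the proposition's $B_d$ is the common isogeny factor $B_e$ appearing $\varphi(e)$ times, of dimension $2\varphi(\tfrac{n}{d})$ when $d\in\Lambda(n)$. The remaining points — identifying the $d=n$ term with $JS_G$ (an elliptic curve since $S_G$ has genus $1$), and isolating the $e=2$ term as the abelian surface $A$ precisely when $n/2$ is odd — are then immediate from the definition of $\Lambda(n)$ and the observation that $\tfrac{dn}{2}\not\equiv 0\pmod n$ with $d=n/2$ says $\tfrac{n^2}{4}\not\equiv 0\pmod n$, i.e. $n/4\notin\mathbb{Z}$, i.e. $n/2$ is odd.
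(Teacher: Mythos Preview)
Your approach is essentially the paper's: apply the group algebra decomposition and the dimension formula \eqref{dimensiones1} with $\gamma=1$, $l=4$, all $\theta(x_j)=t^{n/2}$, separate the divisors $d$ according to whether $\omega^{dn/2}=1$, and handle $d=n/2$ as the special case yielding the abelian surface $A$ precisely when $n/2$ is odd. The core computation in your second paragraph is correct and matches the paper's.

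There is, however, a conceptual slip in your bookkeeping of multiplicities that you should remove. In \eqref{eq:gadec} and \eqref{dimensiones1}, $V_l$ denotes a \emph{complex} irreducible constituent of the rational irreducible $W_l$; for a cyclic group every complex irreducible representation has degree $d_{V_l}=1$, hence $n_l=d_{V_l}/s_{V_l}=1$, not $\varphi(e)$. Each $B_d$ therefore appears with multiplicity one in \eqref{eq:gadec}, and \eqref{dimensiones1} directly gives $\dim B_d=k_{V}\cdot 2\cdot(1-d_V^{H_0})=2\varphi(n/d)$ or $0$, with no further ``reconciliation'' needed. Your final paragraph's claim that the proposition's $B_d$ is a factor appearing $\varphi(e)$ times (so that $\dim A_{e_l}=2\varphi(e)^2$) is incorrect and fails the dimension check: for $n=8$ it would give a contribution of $2\cdot 16=32$, far exceeding $g-1=8$. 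Once you drop that paragraph, your argument is the paper's proof.
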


\begin{proof} Set $n=g-1$ and 
let $\omega$ be a primitive $n$-th  root of unity. For each $0 \leqslant j \leqslant n-1$, we denote by $\chi_j$ the complex irreducible representation of $G = \langle t : t^n =1 \rangle =C_n$ defined as $\chi_{j} : t \mapsto \omega^j.$  After a routine computation, one sees that the collection $\{\chi_d\}$  where $1 \leqslant d \leqslant \tfrac{n}{2}$ and $d$   divides $n$ yields a maximal collection of non-trivial rational irreducible representations of $G$, up to equivalence.

Let $B_d$ denote the factor associated to $\chi_d$ in the group algebra decomposition of $JS$ with respect to $G.$ Clearly, $B_0$ is an elliptic curve isogenous to $JS_G.$ In addition, we observe that $$\dim B_d = \varphi(\tfrac{n}{d})\tfrac{1}{2}\cdot 4 (1- \chi_d^{\langle t^{\frac{n}{2}}\rangle})$$and therefore $B_d = 0$ if and only if  $ \chi_j(t^{\frac{n}{2}})=\omega^{\frac{nd}{2}}=1$ or, equivalently $\tfrac{nd}{2} \equiv 0 \mbox{ mod }n.$ Hence, the group algebra decomposition of $JS$ with respecto to $G$ is $$JS \sim JS_G \times B_{\frac{n}{2}} \times \Pi_{d\in \Lambda(n)}B_d$$where, for each $d\in \Lambda(n),$ the dimension of $B_d$ is $2\varphi(\frac{n}{d}).$ Finally, as $\chi_{\frac{n}{2}}(t)=-1$ we see that  \begin{displaymath} \dim B_{\frac{n}{2}}=\tfrac{1}{2} \cdot 4 (1- \chi_{\frac{n}{2}}^{\langle  t^{\frac{n}{2}}  \rangle}   )= \left\{ \begin{array}{ll}
2 & \textrm{if $\tfrac{n}{2}$ is odd}\\
0 & \textrm{if $\tfrac{n}{2}$ is even}
  \end{array} \right.
\end{displaymath}and  the proof follows after setting $E=B_0$ and $A=B_{\frac{n}{2}}$ when $\tfrac{n}{2}$ is odd.
\end{proof}

\begin{prop} \label{L38} Let $g \geqslant 5$ be an odd integer such that $\tfrac{g-1}{2}$ is a prime number. Then the family $\mathscr{U}_g^2$ consists of at most two equisymmetric strata.
\end{prop}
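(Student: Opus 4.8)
The strategy mirrors closely the proofs of Propositions \ref{L14} and \ref{L24}: I would fix a surface-kernel epimorphism $\theta \colon \Delta \to \mathbf{D}_q = \langle r, s : r^q = s^2 = (sr)^2 = 1\rangle$ from a Fuchsian group $\Delta$ of signature $(1; 2, \stackrel{4}{\ldots}, 2)$, presented as in Lemma \ref{L32} with generators $\alpha_1, \beta_1, x_1, x_2, x_3, x_4$, and then use the braid-type transformations $\mathfrak{B}$ together with $\Aut(\mathbf{D}_q)$ to reduce $\theta$, up to equivalence, to one of two normal forms. Since the genus of the quotient is $1$, the relevant group $\mathfrak{B}$ is generated not only by the braid moves $\Phi_i$ of \eqref{bd} but also by the transformations $A_{1,n}, A_{2,n}, C_{1,i}, C_{2,i}$ described in Subsection \S\ref{braid}; the point of having genus one is precisely that these extra moves give a great deal of freedom, which is what should collapse the count down to two.

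First I would analyze the images of the elliptic generators. Each $\theta(x_j)$ has order dividing $2$, and since the product relation $[\alpha_1,\beta_1]x_1x_2x_3x_4 = 1$ forces $x_1x_2x_3x_4$ to be a commutator, hence to lie in the cyclic subgroup $\langle r\rangle$, the number of the $\theta(x_j)$ that are reflections must be even: either none, two, or four are reflections. If none are reflections then all four $\theta(x_j)$ are trivial (as $q$ is odd, $\mathbf{D}_q$ has no central involution), and surjectivity of $\theta$ would have to come entirely from $\theta(\alpha_1), \theta(\beta_1)$; but these two elements cannot generate $\mathbf{D}_q$ while having trivial commutator-image compatible with the signature — I would rule this out, and similarly handle the case of exactly two reflections, showing it is either impossible or reduces (via the $C_{j,i}$ and $\Phi_i$ moves, which let one transfer a reflection from an $x_j$ into the $\alpha_1$ or $\beta_1$ slot and back) to the four-reflection case. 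So the substantive case is that all four $\theta(x_j)$ are reflections, say $\theta(x_j) = sr^{n_j}$ with $\sum n_j \equiv 0 \pmod q$ (this congruence coming from $x_1x_2x_3x_4 \in \langle r\rangle$ actually equalling a commutator, which in a dihedral group is trivial).

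Next, using an automorphism $\phi_{\alpha,\beta}\colon (r,s)\mapsto(r^\alpha, sr^\beta)$ I normalize one of the reflections, and then repeatedly apply the braid moves $\Phi_1,\Phi_2,\Phi_3$ on the $x_j$ together with the genus-one moves $C_{1,i}, C_{2,i}$ — which conjugate the $x_i$ by elements built from the $\alpha_1,\beta_1$ while modifying $\alpha_1,\beta_1$ — to absorb the remaining freedom. The moves $A_{1,n}, A_{2,n}$ then let me normalize $\theta(\alpha_1)$ and $\theta(\beta_1)$: since $\theta$ is surjective and the $\theta(x_j)$ already generate the subgroup they generate, at least one of $\theta(\alpha_1), \theta(\beta_1)$ must supply the missing generator. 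I expect the outcome to be that $\theta$ is equivalent to one of the two epimorphisms distinguished by whether the reflections $\theta(x_j)$ together generate all of $\mathbf{D}_q$ (giving $\theta(\alpha_1), \theta(\beta_1)$ essentially free, reducible to a rotation) or generate only a proper — necessarily order-two, i.e. $\langle s\rangle$ up to conjugacy — subgroup, in which case one of $\theta(\alpha_1), \theta(\beta_1)$ must be a reflection outside $\langle s\rangle$ to achieve surjectivity; primality of $q$ is what forbids intermediate subgroups and keeps the dichotomy clean. This yields at most two strata.

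The main obstacle I anticipate is bookkeeping the genus-one braid moves $C_{1,i}, C_{2,i}$ correctly: unlike the pure $\Phi_i$, they couple the $x_i$-part and the $\alpha_1,\beta_1$-part of the generating vector, and it is easy to lose track of which combinations are reachable. The cleanest route is probably to first use these moves to reduce $\theta$ on the hyperbolic handle to a standard form $\theta(\alpha_1)=t_0$, $\theta(\beta_1)=1$ for a suitable $t_0$ (as in Proposition \ref{L34}), exploiting $A_{1,n}, A_{2,n}$, and only then to run the $\Phi_i$-reduction on the four reflections exactly as in Claims 1–2 of Proposition \ref{L14}; keeping the two reductions from interfering is the delicate point, and here the fact that $\langle s\rangle$ and its conjugates are the only proper nontrivial subgroups meeting the reflections (because $q$ is prime) is what makes the interference harmless.
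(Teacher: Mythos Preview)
Your overall strategy---reduce the handle $(\theta(\alpha_1),\theta(\beta_1))$ to a standard form, then normalize the four involutions, and read off a dichotomy according to whether the $\theta(x_j)$ already generate $\mathbf{D}_q$ or only $\langle s\rangle$---is exactly the paper's route, and your guess at the two final strata is right. But two errors would derail the execution.

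First, in a \emph{surface-kernel} epimorphism each $\theta(x_j)$ must have order \emph{exactly} $m_j=2$, not merely order dividing $2$; since $q$ is odd the only involutions in $\mathbf{D}_q$ are the reflections $sr^k$, so all four $\theta(x_j)$ are reflections from the start and your ``$0$ or $2$ reflections'' cases simply do not arise. Second, and more seriously, your assertion that a commutator in a dihedral group is trivial is false: for odd $q\geqslant 3$ the commutator subgroup of $\mathbf{D}_q$ is all of $\langle r\rangle$ (indeed $[s,r]=r^{-2}$). Consequently the long relation $[\theta(\alpha_1),\theta(\beta_1)]\,\theta(x_1)\cdots\theta(x_4)=1$ imposes \emph{no} a priori congruence on the exponents $n_j$ before you have normalized the handle; moreover $(sr^{n_1})(sr^{n_2})(sr^{n_3})(sr^{n_4})=r^{-n_1+n_2-n_3+n_4}$, not $r^{\sum n_j}$. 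If you assume the commutator vanishes at the outset you will miss genuine generating vectors such as $(s,r;s,s,s,sr^{2})$, where $[\theta(\alpha_1),\theta(\beta_1)]=r^{-2}\neq 1$.

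The paper handles this by reversing the logic: it first uses the $A_{i,n}$ \emph{and} the $C_{j,i}$ moves (the latter are essential when $a$ or $b$ is a reflection---the $A$-moves alone do not suffice, contrary to what you suggest ``as in Proposition~\ref{L34}'', where the group was abelian) together with the automorphisms $\phi_{\alpha,\beta}$ to reduce the handle to $(a,b)\in\{(1,1),(1,r)\}$, with no hypothesis on the commutator. Only after this reduction is $[\theta(\alpha_1),\theta(\beta_1)]$ forced to be trivial, and only then are the $n_j$ constrained; the braid moves $\Phi_i$ then finish the job. One last slip: in the stratum where the $\theta(x_j)$ generate only $\langle s\rangle$, you say one of $\theta(\alpha_1),\theta(\beta_1)$ must be a reflection outside $\langle s\rangle$, but in the paper's normal form $\Theta_2=(1,r;s,s,s,s)$ the missing generator is the rotation $r$, not a reflection.
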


\begin{proof} Set $q=\tfrac{g-1}{2}$ and assume $q$ to be prime. Let $\Delta$ be a Fuchsian group of signature $(1; 2, \stackrel{4}{\ldots}, 2)$  canonically presented as in Lemma \ref{L32} and let $\theta: \Delta \to G= \mathbf{D}_{q}=\langle r,s : r^{q}=s^2=(sr)^2=1 \rangle$ be a surface-kernel epimorphism representing an action of $G$ on $S \in \mathscr{U}_g^2.$ We write $a=\theta(\alpha_1), b=\theta(\beta_1)$ and $sr^{n_i}=\theta(x_i)$ where $n_i \in \{0, \ldots, q-1\}$ for $i=1,2,3,4$ and  identify $\theta$ with  $(a,b; sr^{n_1}, sr^{n_2}, sr^{n_3}, sr^{n_4}).$

\s

{\bf Claim.} Up to equivalence, we can assume $(a,b)=(1,1)$ or $(a,b)=(1,r).$ 

\s

There are four cases to consider; namely $(a,b)$ equals to either
$$(r^u, r^v), \,\, (sr^{u}, r^v), \,\, (r^u, sr^v) \,\, \mbox{ or }\,\, (sr^u, sr^{v}) \,\,\,\, \mbox{ for some} \,\, u,v \in \{0, \ldots, q-1\}.$$

First of all, note the third and fourth cases can be disregarded, since  $A_{1,1} \circ A_{2,1}$ and $A_{1,1}$ respectively (see \S\ref{braid}), transform them into the second case. 

\s

Assume that $a=r^u$ and $b=r^{v}.$ 
\begin{enumerate}  
\item If $u = 0$ then, up to an automorphism, we can assume $(a,b)=(1,1)$ or $(1,r).$ 
\item If $u \neq 0$ and $\tilde{u}$ is its inverse in the field of $q$ elements, then the transformation $\phi_{-\tilde{u}, 1} \circ A_{2,1} \circ  A_{1,-1} \circ A_{1,-v\tilde{u}}$ allows us to assume that, up to equivalence, $(a,b)=(1,r).$ 
\end{enumerate}

 Assume that $a=sr^u$ and $b=r^{v}$ 
 \begin{enumerate}
 \item If $v=0$ then, up to an automorphism, we can assume $(a,b)=(s,1).$
 \item If $v \neq 0$ and $\hat{v}$ is its inverse in the field of $q$ elements, then $\phi_{\hat{v},0} \circ A_{2,-u\hat{v}}$ shows that, up to equivalence, we can assume $(a,b)=(s, r).$
\end{enumerate}
The proof of the claim follows after noticing that the cases $(s,r)$ and $(1,s)$ are equivalent to the first case under the action of the transformations $C_{1,4}$ and $C_{2,4}$ respectively.

\s

If $(a,b)=(1,1)$ then we can assume $n_1=0$ and $n_2=1.$ Thus, $\theta$ is equivalent to $(1,1; s,sr,sr^{n_3}, sr^{n_3-1}).$ Now, we apply transformation $\Phi_3$ to see that $\theta$ is equivalent to \begin{equation} \label{virus}(1,1; s,sr,sr,s) \,\, \mbox{ and therefore equivalent to }\,\, (1,1; s,s,sr,sr).\end{equation}

If $(a,b)=(1,r)$ then we can assume $n_1=0$ and therefore $\theta$ is equivalent to $(1,r; s, sr^{n_2}, sr^{n_3}, sr^{n_4}).$

\begin{enumerate}
\item If $n_2=0$ then $n_3=n_4.$ If follows that $\theta$ is equivalent to $(1,r; s,s,s,s)$ or to $$ \theta_j:= (1,r^j; s,s,sr,sr) \,\, \mbox{ for some } j\in \{1, \ldots, q-1\}.$$The latter case is equivalent to \eqref{virus}, since $C_{2,3} \circ C_{2,2}$ identifies $\theta_j$ with $\theta_{j-1}.$

\s

\item If $n_2 \neq 0$ then $\theta$ is equivalent to $(1,r^j; s, sr, sr^{n_3}, sr^{n_3-1})$  for some $j \in \{1, \ldots, q-1\}.$ Note that $\Phi_3$ shows that  $\theta$ is equivalent  to $(1, r^j; s,sr,sr,s);$ then, we see that $\theta$ is equivalent to $\theta_j.$
\end{enumerate}
All the above says that there are at most 2  strata given by  $\Theta_1=(1,1; s,s,sr,sr)$ and $\Theta_2=(1, r; s,s,s,s).$
\end{proof}

\begin{prop} \label{dedo}  Let $g \geqslant 7$ be an odd integer such that $\tfrac{g-1}{2}$ is odd. If $S \in \mathscr{U}_g^2$ then the Jacobian variety $JS$ decomposes, up to isogeny, as$$JS\sim E \times  A\times \Pi_{d\in \Omega(\frac{g-1}{2})}B_d^2,$$
where $A$ is an abelian surface,  $B_d$ is an abelian variety of dimension $\varphi(\frac{g-1}{2d})$ and $E$ is an elliptic curve isogenous to $JS_G$. Furthermore $$\mbox{Prym}(S_{\langle r \rangle} \to S_G) \sim A \,\, \mbox{ and } \,\, \mbox{Prym}(S_{\langle s \rangle} \to S_G) \sim \Pi_{d\in \Omega(\frac{g-1}{2})}B_d$$and therefore $JS\sim JS_G \times \mbox{Prym}(S_{\langle r\rangle} \to S_G) \times \mbox{Prym}(S_{\langle s\rangle} \to S_G)^2.$

\end{prop}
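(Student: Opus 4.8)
The plan is to run the group algebra decomposition of $JS$ with respect to $G\cong\mathbf{D}_n$, where $n:=\tfrac{g-1}{2}$ is odd by hypothesis, and then to identify its factors with $JS_G$ and two Prym varieties by means of the induced isogenies \eqref{decoind1} and \eqref{decoind2}; the argument runs parallel to the proofs of Propositions \ref{desco3dim} and \ref{celular}. First I would recall from the proof of Proposition \ref{desco3dim} that, since $n$ is odd, the non-trivial rational irreducible representations of $\mathbf{D}_n$ are $\chi_2:r\mapsto 1,\, s\mapsto -1$ together with the representations $W_d$ ($d\in\Omega(n)$) assembled from the degree-two complex irreducibles $\psi_d$, all Schur indices being $1$ and $k_{\psi_d}=[K_d:\mathbb{Q}]=\tfrac12\varphi(\tfrac{n}{d})$. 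The quotient $S_G$ has genus $\gamma=1$, so the group algebra decomposition reads $$JS\sim B_1\times B_2\times\Pi_{d\in\Omega(n)}B_d^2,$$ with $B_1\sim JS_G$. A preliminary observation, which is exactly what lets us dispense with the partial classification of strata of Proposition \ref{L38}, is that each $\theta(x_k)$ has order exactly $2$, hence is a reflection $sr^{n_k}$ of $\mathbf{D}_n$ (for $n$ odd there are no further involutions), and that all reflections of $\mathbf{D}_n$ are conjugate; consequently the dimensions of the fixed subspaces $V^{\langle\theta(x_k)\rangle}$, and with them the dimensions of the $B_i$, are independent of the surface-kernel epimorphism $\theta$ representing the action.

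Next I would compute these dimensions from formula \eqref{dimensiones1} with $\gamma=1$ and $l=4$, so that the term $d_{V_i}(\gamma-1)$ drops out; recall also that $\dim B_1=\gamma=1$. Since $B_1\sim JS_G$ and $S_G$ has genus one, $E:=B_1$ is an elliptic curve isogenous to $JS_G$. For $\chi_2$ one has $\chi_2(sr^{n_k})=-1$, so $\chi_2^{\langle\theta(x_k)\rangle}=0$ and $\dim B_2=1\cdot\bigl[\tfrac12\cdot 4\cdot(1-0)\bigr]=2$, so that $A:=B_2$ is an abelian surface. For $d\in\Omega(n)$, the matrix $\psi_d(sr^{n_k})$ has eigenvalues $1$ and $-1$, so $\psi_d^{\langle\theta(x_k)\rangle}$ is one-dimensional and $\dim B_d=\tfrac12\varphi(\tfrac{n}{d})\cdot\bigl[\tfrac12\cdot 4\cdot(2-1)\bigr]=\varphi(\tfrac{n}{d})$. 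This yields the claimed isogeny $JS\sim E\times A\times\Pi_{d\in\Omega(n)}B_d^2$; as a consistency check, $\sum_{d\mid n}\varphi(\tfrac{n}{d})=n$ gives total dimension $1+2+2(n-1)=2n+1=g$.

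To obtain the Prym descriptions I would feed into \eqref{decoind1} and \eqref{decoind2} the dimensions $d_{V_l}^H$ of the $H$-fixed subspaces for $H\in\{\langle r\rangle,\langle s\rangle,G\}$. A direct inspection, using $\psi_d(r)=\mbox{diag}(\omega^d,\bar\omega^d)$ with $\omega$ a primitive $n$-th root of unity and $1\leqslant d<n$, shows that under $\langle r\rangle$ the trivial representation and $\chi_2$ each contribute $1$ and every $\psi_d$ contributes $0$; that under $\langle s\rangle$ the trivial contributes $1$, $\chi_2$ contributes $0$ and every $\psi_d$ contributes $1$; and that under $G$ only the trivial representation contributes, with value $1$. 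Hence \eqref{decoind1} gives $JS_{\langle r\rangle}\sim B_1\times B_2=E\times A$ and $JS_{\langle s\rangle}\sim B_1\times\Pi_{d\in\Omega(n)}B_d=E\times\Pi_{d\in\Omega(n)}B_d$, while subtracting the $G$-contributions as in \eqref{decoind2} gives $\mbox{Prym}(S_{\langle r\rangle}\to S_G)\sim B_2=A$ and $\mbox{Prym}(S_{\langle s\rangle}\to S_G)\sim\Pi_{d\in\Omega(n)}B_d$. Combining these identifications with $E\sim JS_G$ produces $JS\sim JS_G\times\mbox{Prym}(S_{\langle r\rangle}\to S_G)\times\mbox{Prym}(S_{\langle s\rangle}\to S_G)^2$, the final assertion.

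There is no deep difficulty here: the proposition is an application of the apparatus of Section \ref{s2}. The two points that really demand care are, first, checking that the computation is insensitive to the stratum — which is precisely where the hypothesis that $n=\tfrac{g-1}{2}$ be odd enters, through the conjugacy of all involutions of $\mathbf{D}_n$ (for $n$ even the central involution $r^{n/2}$ can occur as some $\theta(x_k)$ and the uniformity fails, as in the remark following this proposition) — and second, the careful bookkeeping of the fixed-subspace dimensions under $\langle r\rangle$, $\langle s\rangle$ and $G$ when applying the induced-isogeny formulas.
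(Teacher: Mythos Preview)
Your argument is correct and follows essentially the same route as the paper's own proof: both set $n=\tfrac{g-1}{2}$, invoke the rational irreducible representations of $\mathbf{D}_n$ for $n$ odd, observe that the conjugacy of all involutions makes the dimension computation stratum-independent, apply formula \eqref{dimensiones1} with $\gamma=1$ and $l=4$ to obtain $\dim B_2=2$ and $\dim B_d=\varphi(n/d)$, and then read off the Prym identifications from \eqref{decoind2} with $H_1=\langle r\rangle$ or $\langle s\rangle$ and $H_2=G$. Your version is slightly more explicit in tabulating the fixed-subspace dimensions and in adding the dimension check, but the substance is identical.
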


\begin{proof}
 Set $n=\tfrac{g-1}{2}$ and assume that $n$ is odd. Similarly as noticed in the proof of Proposition \ref{desco3dim}(a), the dimension of the factors arising in the group algebra decomposition of $JS$ does not depend on the  stratum to which $S$ belongs. So, we assume the action to be represented by $\Theta_2.$ Now, keeping the same notation as before, the rational irreducible representations of $G$ are $\chi_1, \chi_2$ and $\psi_d$ with $d\in \Omega(n)$ and  $$JS \sim B_1 \times B_2 \times \Pi_{d \in \Omega(n)}B_d^2,$$where  $B_1 \sim JS_G$ is an elliptic curve. As $\chi_2^{\langle s \rangle}=0$ and $\psi_d^{\langle s \rangle}=1$ for $d \in  \Omega(n)$ one sees that $\dim B_2=2$ and $\dim B_d=\varphi(\tfrac{n}{d}).$ Now, we consider the induced isogeny \eqref{decoind2} with $H_1=\langle r \rangle$ and $H_2=G$ to see that $$B_2 \sim \mbox{Prym}(S_{\langle r \rangle} \to S_G).$$Similarly,  consider the induced isogeny \eqref{decoind2} with $H_1=\langle s \rangle$ and $H_2=G$ to see that $$\Pi_{d\in \Omega(n)}B_d \sim \mbox{Prym}(S_{\langle s \rangle} \to S_G),$$and the result follows by setting $E=B_1$ and $A=B_2.$
\end{proof}

\begin{rema} The isogeny decomposition of the Jacobian varieties $JS$ for $S \in \mathscr{U}_5^2$ differs from the stated in Proposition \ref{dedo} for the case $g \geqslant 7.$ Furthermore, the decomposition depends on the  stratum to which $S$ belongs (due to the fact that the involutions of $\mathbf{D}_2$ are non-conjugate). Indeed

\begin{enumerate}
\item If $S$ belongs to the  stratum represented by $\Theta_1$ then $JS\sim E_1\times E_2\times E_3\times A,$ 
where $E_1 \sim JS_G, E_2 \sim JS_{\langle s\rangle}$ and $E_3 \sim JS_{\langle sr\rangle}$ are elliptic curves and $A \sim JS_{\langle r\rangle}$ is an abelian surface. 

\s

\item If $S$ belongs to the stratum represented by $\Theta_2$ then $JS\sim E\times A_1\times A_2,$
where $E \sim JS_G$ is an elliptic curve and $A_1 \sim JS_{\langle r\rangle}, A_2 \sim JS_{\langle sr\rangle}$ are abelian surfaces. 
\end{enumerate}

\end{rema}

\s

{\bf Acknowledgments.} The authors are very grateful to {\it SageMath} (www.sagemath.org) and its developers for generously providing a useful software which allows the authors to perform helpful experiments throughout the preparation of this manuscript, towards obtaining general results. The authors are also very grateful to the referee for useful suggestions and comments.

\end{document}